\newcommand{\beq}{\begin{equation}}
\newcommand{\eeq}{\end{equation}}
\newcommand{\st}{{\rm s.t.}}
\newcommand{\cO}{{\mbox{$\mathcal{O}$}}}
\newcommand{\tA}{{\mbox{$\widetilde{A}$}}}
\newcommand{\tP}{{\mbox{$\widetilde{P}$}}}
\newtheorem{theorem}{Theorem}[section]
\newtheorem{lemma}{Lemma}[section]
\newtheorem{corollary}{Corollary}[section]
\newtheorem{remark}{Remark}[section]
\def\smskip{\par\vskip 5 pt}
\def\QED{\hfill{\bf Q.E.D.}\smskip}
\title{Improved Iteration Complexity Bounds of Cyclic Block Coordinate Descent for Convex Problems}
\author{{Ruoyu Sun} \thanks{Department of Management Science and Engineering,  Stanford University, Stanford, CA. \texttt{ruoyu@stanford.edu}.}, {Mingyi Hong}\thanks{Department of Industrial and Manufacturing Systems Engineering, Iowa State University, Ames, IA,  \texttt{mingyi@iastate.edu}. } \
 \thanks{ The short version of the paper has been submitted to NIPS 2015 on June 5th, 2015
 and accepted. The authors contribute equally to this work.}
\date{\today} 
}
\begin{document}

\maketitle
\vspace{-0.5cm}
\begin{abstract}
The iteration complexity of the block-coordinate descent (BCD) type algorithm has been under extensive investigation. It was recently shown that for convex problems the classical cyclic BCGD (block coordinate gradient descent) achieves an $\cO(1/r)$ complexity ($r$ is the number of passes of all blocks). However, such bounds are at least linearly depend on $K$ (the number of variable blocks), and are at least $K$ times worse than those of the gradient descent (GD) and proximal gradient (PG) methods.
In this paper, we aim to close such theoretical performance gap between cyclic BCD and GD/PG. First we show that for a family of quadratic nonsmooth problems, the complexity bounds for cyclic Block Coordinate Proximal Gradient (BCPG), a popular variant of BCD, can match those of the GD/PG in terms of dependency on $K$ (up to a $\log^2(K)$ factor).
{\color{black}
For the same family of problems, we also improve the bounds of the classical BCD (with exact block minimization) by an order of $K$.} Second, we establish an improved complexity bound of Coordinate Gradient Descent (CGD)
 for general convex problems which can match that of GD in certain scenarios.
 Our bounds are sharper than the known bounds as they are always at least $K$ times worse than GD.
 {\color{black} Our analyses do not depend on the update order of block variables inside each cycle, thus our results
 also apply to BCD methods with random permutation (random sampling without replacement, another popular variant). }
\end{abstract}

\vspace{-0.3cm}
\section{Introduction}
\vspace{-0.3cm}

Consider the following convex optimization problem
\begin{align}\label{eq:origin}
\begin{split}
\min&\quad f(x)=g(x_1,\cdots, x_K) + \sum_{k=1}^{K}h_k(x_k),\quad \st \; \quad x_k\in X_k, \ \forall~k=1,\cdots K.
\end{split}
\end{align}
where $g: X\to \mathbb{R}$ is a convex smooth function; $h: X\to \mathbb{R}$ is a convex lower semi-continuous possibly nonsmooth function; $x_k\in X_k\subseteq \mathbb{R}^{N}$ is a block variable.
A very popular method for solving this problem is the so-called block coordinate descent (BCD) method \cite{bertsekas99}, where each time a single block variable is optimized while the rest of the variables remain fixed. Using the classical cyclic block selection rule, the BCD method is described in the following table.
\vspace{-0.3cm}
\begin{center}
\fbox{
\begin{minipage}{5.2in}
\smallskip
\centerline{\bf Algorithm 1: The Cyclic Block Coordinate Descent (BCD)}
\smallskip
At each iteration $r+1$, update the variable blocks by:
\begin{equation}\label{eq:BCM}
x^{(r)}_k \in \min_{x_k\in X_k}\; g\left(x_k, w^{(r)}_{-k}\right)+h_k(x_k), \; k=1,\cdots, K.\\
\end{equation}
\end{minipage}
}
\end{center}
\!where we have used the following short-handed notations:{
\begin{align}
w^{(r)}_k&:=\left[x^{(r)}_1,\cdots, x^{(r)}_{k-1}, x^{(r-1)}_{k}, x^{(r-1)}_{k+1}, \cdots, x^{(r-1)}_K\right], \ k=1,\cdots, K,\nonumber\\
w^{(r)}_{-k}&:=\left[x^{(r)}_1,\cdots, x^{(r)}_{k-1}, x^{(r-1)}_{k+1}, \cdots, x^{(r-1)}_K\right], \ k=1,\cdots, K,\nonumber\\
x_{-k}&:=\left[x_1,\cdots, x_{k-1}, x_{k+1}, \cdots, x_K\right]. \nonumber
\end{align}}
\!The convergence analysis of the BCD has been extensively studied in the literature, see \cite{bertsekas99, PowellBCD73,tseng01,Razaviyayn12SUM,Beck13,hong13complexity,Grippo00,Luo92CD,xu12}.
 {\color{black}
For example it is known that for smooth problems (i.e. $f$ is continuous differentiable but possibly nonconvex, $h=0$),
 if each subproblem has a unique solution and $g$ is non-decreasing in the interval between the current iterate and the minimizer of the subproblem
 (one special case is per-block strict convexity), then every limit point of $\{x^{(r)}\}$ is a stationary point \cite[Proposition 2.7.1]{bertsekas99}.
The authors of \cite{Grippo00,tseng01} have derived relaxed conditions on the convergence of BCD.  } 
{\color{black}In particular, when problem \eqref{eq:origin} is convex and the level sets are compact, the convergence of the BCD is guaranteed without requiring the subproblems to have unique solutions \cite{Grippo00}. }
Recently Razaviyayn {\it et al} \cite{Razaviyayn12SUM} have shown that the BCD converges if each subproblem \eqref{eq:BCM} is solved inexactly, by way of optimizing certain surrogate functions. 

  Luo and Tseng in \cite{Luo92CD} have shown that when problem  \eqref{eq:origin} satisfies certain additional assumptions such as having a smooth composite objective and a polyhedral feasible set, then BCD converges linearly without requiring the objective to be strongly convex. 
There are many recent works on showing iteration complexity for randomized BCGD (block coordinate gradient descent), see \cite{richtarik12, nestrov12,lu13complexity, meisam14nips, lu13randomized} and the references therein. However the results on the classical cyclic BCD is rather scant. Saha and Tewari \cite{Saha10} show that the cyclic BCD achieves sublinear convergence for a family of special LASSO problems. {Nutini {\it et al} \cite{schmidt15icml} show that when the problem is strongly convex, unconstrained and smooth, BCGD with certain Gauss-Southwell block selection rule could be faster than the randomized rule.} Recently Beck and Tetruashvili show that cyclic BCGD converges sublinearly if the objective is smooth. Subsequently Hong {\it et al} in \cite{hong13complexity} show that such sublinear rate not only can be extended to problems with nonsmooth objective, but is true for a large family of BCD-type algorithm (with or without per-block exact minimization, which includes BCGD as a special case). When each block is minimized exactly and when there is no per-block strong convexity, Beck \cite{Beck13b} proves the sublinear convergence for certain 2-block convex problem (with only one block having Lipschitzian gradient). {It is worth mentioning that all the above results on cyclic BCD directly apply to {\it randomly permuted} BCD in which the blocks are randomly sampled without replacement in each cycle, since the proof techniques do not require the same order to be used in each cycle.
However, we suspect that special proof techniques tailored for randomly permutation
are necessary for establishing tight bounds of randomly permuted BCD; in fact, a recent work of Sun {\it et al} \cite{sun2015expected} on randomly permuted ADMM
provides some theoretical evidence that the cyclic rule is worse than the random permutation rule.
}

To illustrate the rates developed for the cyclic BCD algorithm, let us define $X^*$ to be the optimal solution set for problem \eqref{eq:origin}, and define the constant
\begin{align}\label{eq:r0}
R_0:=\max_{x\in X}\max_{x^*\in X^*}\left\{\|x-x^*\|\mid f(x)\le f(x^{(0)})\right\}.
\end{align}
Let us assume that $h_k(x_k)\equiv 0, \; X_k=\mathbb{R}^N,\; \forall~k$ for now, and assume that $g(\cdot)$ has Lipschitz continuous gradient:
\begin{align}\label{eq:lip}
\|\nabla g(x)-\nabla g(z) \|\le L\|x-z\|, \; \forall~x, z\in X.
\end{align}
Also assume that $g(\cdot, x_{-k})$ has Lipschitz continuous gradient with respect to each $x_k$, i.e.,
\begin{align}\label{eq:lip:block}
\|\nabla_k g(x_k, x_{-k})-\nabla_k g(v_k, x_{-k}) \|\le L_k\|x_k-v_k\|, \; \forall~x, v\in X, \; \forall~k.
\end{align}
Let $L_{\max}:=\max_{k}L_k$ and $L_{\min}:=\min_{k}L_k$. It is known that the cyclic BCPG has the following iteration complexity \cite{Beck13,hong13complexity} \footnote{\footnotesize  Note that the assumptions made in \cite{Beck13} and \cite{hong13complexity} are slightly different, but the rates derived in both cases have similar dependency on the problem dimension $K$. }
\begin{align}
\Delta^{(r)}_{\rm BCD}:=f(x^{(r)})-f^*\le C L_{\max}(1+K L^2/L^2_{\min})R^2_0\frac{1}{r}, \quad \forall~r\ge 1, \label{eq:complexity:bcd}
\end{align}
where $C>0$ is some constant independent of problem dimension.  Similar bounds are provided for cyclic BCD in \cite[Theorem 6.1]{hong13complexity}. In contrast, it is well known that when applying the classical gradient descent (GD) method to problem \eqref{eq:origin} with the constant stepsize $1/L$, we have the following rate estimate \cite[Corollary 2.1.2]{Nesterov04}
\begin{align}\label{eq:complexity:gd}
\Delta^{(r)}_{\rm GD}:=f(x^{(r)})-f(x^*)\le \frac{2\|x^{(0)}-x^*\|^2 L}{r+4}\le \frac{2R^2_0 L}{r+4},\quad \forall \; r\ge 1, \; \forall~x^*\in X^*.
\end{align}
Note that unlike \eqref{eq:complexity:bcd}, here the constant in front of the $1/(r+4)$ term is {\it independent} of the problem dimension. In fact, the ratio of the bound given in \eqref{eq:complexity:bcd} and \eqref{eq:complexity:gd} is
$$\frac{C L_{\max}}{L}(1+K L^2/L^2_{\min})\frac{r+4}{r}$$
which is at least in the order of $K$. For big data related problems with over millions of variables, a multiplicative constant in the order of $K$ can be a serious issue. In a recent work by Saha and Tewari \cite{Saha10}, the authors show that for a LASSO problem with special data matrix, the rate of cyclic BCD (with special initialization) is indeed $K$-independent. Unfortunately, such a result has not yet been extended to any other convex problems. An open question posed by a few authors \cite{Beck13, Beck15, Saha10} are: is such a $K$ factor gap intrinsic to the cyclic BCD or merely an artifact of the existing analysis?

\subsection{Summary of Contributions}

In this paper, we provide improved iteration complexity bounds of cyclic block coordinate descent methods for
 convex composite function minimization.
 Our analyses do not depend on the update order of block variables inside each cycle, thus our results
 also apply to BCD methods with random permutation (random sampling without replacement, another popular variant).
 Recall that $K$ is the number of blocks,
 $L$ is the global Lipshitz constant of $\nabla g$, and $L_k$ is the Lipshitz constant of $\nabla g$ with respect to the $k$-th block.

 \begin{itemize}
   \item
     For minimizing the sum of a convex quadratic function (not necessarily strongly convex) and sepearable non-smooth function (including LASSO as a special case), we prove that the iteration complexity bound of cyclic block coordinate gradient descent (C-BCGD) with a small stepsize $1/L$
    matches that of GD (gradient descent) up to an $O(\log^2(K))$-factor.
    When the stepsize for the $k$-th block update is $1/L_k$, we establish an iteration complexity bound that is
    $K$-times better than the existing bound, but still $K$ times worse than GD.
    We also improve the bound of the exact BCD by an order of $K$ (note that if each block has size $1$, exact CD is equivalent
    to CGD with stepsize $1/L_k$; otherwise, exact BCD is different from BCGD).

    \item For general smooth convex optimization (i.e. $h_k = 0, \forall k$), we prove a meta iteration complexity bound of cyclic
    coordinate gradient descent (C-CGD) that is proportional to the spectral norm of a ``moving-iterate Hessian''.
    By an preliminary estimate of this spectral norm, this meta bound implies an  iteration complexity bound that, in certain scenarios, matches the bound
    of GD  and $K$-times better than the known bounds.
    For the quadratic case, the moving-iterate Hessian becomes a constant matrix and the meta bound reduces to
     the bound mentioned in the first bullet which matches GD up to an $O(\log^2{2K})$-factor for small stepsize $1/L$.

 \end{itemize}

   For illustration, we summarize the comparison of our main results with other existing bounds in several simple settings in Table \ref{compare results}.
   For simplicity, we only list the results of BCGD for the smooth case; note that the results listed in the last row for QP
   also apply to LASSO (or more general, sum of a quadratic function plus separable non-smooth function),
   and for cyclic exact BCD the results are similar to the last row (see Theorem \ref{thm:bound:bcd}).
    We assume $L_k = L_1, \forall k$ and consider two extreme values of $L/L_k$: $L/L_1 = 1$ and $L/L_1 = K$.
    The first case represents a separable objective function $g(x) = \frac{L}{2} \sum_{i=1}^K \| x_i \|^2  $ with a block diagonal Hessian matrix,
    and the second case represents a highly non-separable objection function $g(x) = \frac{L }{2K} (\sum_{i=1}^K  x_i )^2 $ with a full Hessian matrix, assuming
    all $x_i$'s have the same size.

\begin{table}[!htbp]
\caption{ Comparison of Various Iteration Complexity Results }
\label{compare results}
\vspace{-0.2cm}
\begin{tabular}{|c|c|c|c|}
\hline
Lip-constant & Diag Hessian $ L_i  = L $  & Full Hessian   $L_i  = L/K  $   &  Full Hessian $L_i = L/K $                       \\
1/Stepsize     & $ 1/L $ &  Large stepsize $ K/L $  &   Small stepsize $ 1/L $   \\
\hline
\hline
 GD         & ${L}/{r}$         & N/A                            & $ {L}/{r}  $                      \\
\hline
 {Randomized BCGD }        & ${L}/{r}$         & ${L}/{(Kr)}$                            & $ {L}/{r}  $                      \\
\hline
 Cyclic BCGD \cite{Beck13}    & $K {L}/{r}$        &  $K^2{L}/{r}  $               & $ K {L}/{r}$                     \\
\hline
 Cyclic CGD, Coro \ref{coro: CGD ite complexity} & $K {L}/{r}$      &   $ {{K L}/{r} } $       &  {$ {L}/{r} $  }                      \\
\hline
 Cyclic BCGD (QP), Thm \ref{thm:bound:bcpg}  &  $ \log^2(2K) {L}/{r} $         &   $  \log^2(2K) K{L}/{r}  $       &   $ \log^2(2K) {L}/{r} $                       \\
\hline
\end{tabular}
\end{table}

\section{Improved Bounds of Cyclic BCPG for Nonsmooth Quadratic Problem}\label{sec:BCPG}
In this section, we consider the following nonsmooth quadratic problem
\begin{align}\label{eq:qp:ns}
\min\; f(x):= \frac{1}{2}\left\|\sum_{k=1}^{K}A_k x_k - b\right\|^2+ \sum_{k=1}^{K}h_k(x_k), \quad \st\; x_k\in X_k, \; \forall~k
\end{align}
where $A_k\in\mathbb{R}^{M\times N}$; $b\in\mathbb{R}^{M}$;  $x_k\in\mathbb{R}^{N}$ is the $k$th block coordinate; $h_k(\cdot)$ is the same as in \eqref{eq:origin}. Note the blocks are assumed to have equal dimension for simplicity of presentation. Define $A:=[A_1,\cdots, A_k]\in\mathbb{R}^{M \times KN}$. For simplicity, we have assumed that all the blocks have the same size. Problem \eqref{eq:qp:ns} includes for example LASSO and group LASSO as special cases. 

We consider the following cyclic BCPG algorithm.
\vspace{-0.5cm}
\begin{center}
\fbox{
\begin{minipage}{6.5in}
\smallskip
\centerline{\bf Algorithm 2: The Cyclic Block Coordinate Proximal Gradient (BCPG)}
\smallskip
At each iteration $r+1$, update the variable blocks by:{
\begin{align}\label{eq:BCPG}
x^{(r+1)}_k &= \arg\min_{x_k\in X_k}\; g(w^{(r+1)}_k)+\left\langle \nabla_k g\left(w^{(r+1)}_{k}\right), x_k-x^{(r)}_k\right\rangle+\frac{P_k}{2}\left\|x_k-x^{(r)}_k\right\|^2 +h_k(x_k)
\end{align}}
\end{minipage}
}
\end{center}
Here $P_k$ is the inverse of the stepsize for $x_k$, which satisfies
\begin{align}
P_k \ge \lambda_{\max}\left( A^T_k A_k \right)=L_k, \; \forall~k. \label{eq:Lk}
\end{align}
Define $P_{\max}:=\max_{k}P_k$ and $P_{\min}=\min_k P_k$.
Note that for the least square problem (smooth quadratic minimization, i.e. $h_k \equiv 0, \forall \; k$), BCPG reduces to the widely used BCGD method.

The optimality condition for the $k$th subproblem is given by
\begin{align}\label{eq:opt:bcpg}
\left\langle  \nabla_k g(w^{(r+1)}_k) + P_k (x_k^{(r+1)}-x_k^{(r)}), x_k - x^{(r+1)}_k  \right\rangle +h_k(x_k) - h_k(x^{(r+1)}_k)\ge 0, \; \forall~x_k\in X_k.
\end{align}
In what follows we show that the cyclic BCPG for problem \eqref{eq:qp:ns} achieves a complexity bound that only dependents on $\log^2(K)$, and apart from such log factor it is at least $K$ times better than those known in the literature. Our analysis consists of the following three main steps:
\begin{enumerate}
\item Estimate the  descent of the objective after each BCPG iteration;
\item Estimate the cost yet to be minimized (cost-to-go) after each BCPG iteration;
\item Combine the above two estimates to obtain the final bound.
\end{enumerate}

First we show that the BCPG achieves the sufficient descent.
\begin{lemma}\label{lemma:descent:bcpg}
 We have the following estimate of the descent when using the BCPG:
\begin{align} \label{eq:descent:bcpg}
f(x^{(r)})-f(x^{(r+1)})\ge  \sum_{k=1}^{K}\frac{P_k}{2}\|x^{(r+1)}_k-x^{(r)}_k\|^2.
\end{align}
\end{lemma}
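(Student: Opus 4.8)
The plan is to prove the estimate block by block and then telescope. Recall the intermediate iterates $w^{(r+1)}_k$ introduced after Algorithm~1, and observe that $w^{(r+1)}_1 = x^{(r)}$, that setting $w^{(r+1)}_{K+1}:= x^{(r+1)}$ is consistent with the definitions, and that $w^{(r+1)}_{k+1}$ differs from $w^{(r+1)}_k$ only in the $k$-th block, where the entry $x^{(r)}_k$ is replaced by $x^{(r+1)}_k$. Hence $f(x^{(r)})-f(x^{(r+1)}) = \sum_{k=1}^K \big( f(w^{(r+1)}_k) - f(w^{(r+1)}_{k+1}) \big)$, and it suffices to show, for each $k$, that $f(w^{(r+1)}_k) - f(w^{(r+1)}_{k+1}) \ge \tfrac{P_k}{2}\|x^{(r+1)}_k - x^{(r)}_k\|^2$.

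For a fixed $k$, I would first exploit the quadratic structure of $g$: as a function of the $k$-th block alone (with the other blocks frozen at the values in $w^{(r+1)}_k$), $g$ is a quadratic whose Hessian is $A^T_k A_k$, so the second-order Taylor expansion is exact,
\[
g(w^{(r+1)}_{k+1}) = g(w^{(r+1)}_k) + \big\langle \nabla_k g(w^{(r+1)}_k),\, x^{(r+1)}_k - x^{(r)}_k \big\rangle + \tfrac12 \big\| A_k ( x^{(r+1)}_k - x^{(r)}_k ) \big\|^2 .
\]
Bounding $\|A_k(\cdot)\|^2 \le \lambda_{\max}(A^T_k A_k)\,\|\cdot\|^2 = L_k\|\cdot\|^2 \le P_k\|\cdot\|^2$ via \eqref{eq:Lk} produces the familiar ``descent-lemma'' upper bound with the constant $P_k/2$ in front of the squared step.

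Next I would invoke the optimality condition \eqref{eq:opt:bcpg} for the $k$-th subproblem with test point $x_k = x^{(r)}_k$; using $\langle x^{(r+1)}_k - x^{(r)}_k,\, x^{(r)}_k - x^{(r+1)}_k\rangle = -\|x^{(r+1)}_k - x^{(r)}_k\|^2$, this rearranges to
\[
\big\langle \nabla_k g(w^{(r+1)}_k),\, x^{(r+1)}_k - x^{(r)}_k \big\rangle \;\le\; -\,P_k\,\|x^{(r+1)}_k - x^{(r)}_k\|^2 + h_k(x^{(r)}_k) - h_k(x^{(r+1)}_k).
\]
Substituting into the previous display eliminates the inner-product term and combines the two copies of $P_k\|\cdot\|^2$ into $-\tfrac{P_k}{2}\|\cdot\|^2$, giving $g(w^{(r+1)}_{k+1}) + h_k(x^{(r+1)}_k) \le g(w^{(r+1)}_k) + h_k(x^{(r)}_k) - \tfrac{P_k}{2}\|x^{(r+1)}_k - x^{(r)}_k\|^2$. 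Since $w^{(r+1)}_k$ and $w^{(r+1)}_{k+1}$ agree in every block $j\ne k$, adding the common quantity $\sum_{j\ne k} h_j$ evaluated at those blocks to both sides turns $g + h_k$ into $f$, which yields the desired per-block decrease; summing over $k=1,\dots,K$ and telescoping finishes the proof.

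I do not anticipate a serious obstacle: this is the standard sufficient-decrease estimate for proximal-gradient-type block updates. The only points needing care are the index bookkeeping in the $w^{(r+1)}_k$ notation and the observation that, because $g$ is quadratic, the block expansion above is an exact identity, so the argument uses only the per-block constants $L_k$ (through the stepsize rule \eqref{eq:Lk}) and never the global Lipschitz constant $L$. It is also worth noting that the sweep order of the blocks is never used, so the bound holds verbatim for any deterministic or randomly permuted block order, consistent with the claims in the introduction.
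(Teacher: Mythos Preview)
Your proposal is correct and follows essentially the same route as the paper: telescope $f(x^{(r)})-f(x^{(r+1)})$ through the intermediate iterates $w^{(r+1)}_k$, apply the block descent inequality (the paper writes it directly as the $P_k/2$ upper model, while you first write the exact quadratic expansion and then bound $\|A_k(\cdot)\|^2\le P_k\|\cdot\|^2$, which is the same step), and then use the optimality condition \eqref{eq:opt:bcpg} with test point $x_k=x^{(r)}_k$ to eliminate the inner-product and $h_k$ terms. Your remarks on index bookkeeping and order-independence are accurate and match the paper's usage.
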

\begin{proof}
We have the following series of inequalities
\begin{align}
&f(x^{(r)})-f(x^{(r+1)}) \nonumber\\
& = \sum_{k=1}^{K} f(w^{(r+1)}_k)-f(w^{(r+1)}_{k+1})\nonumber\\
&\ge \sum_{k=1}^{K}f(w^{(r+1)}_k)-\left(g(w^{(r+1)}_k)+h_k(x^{(r+1)}_k)+\left\langle \nabla_k g\left(w^{(r+1)}_{k}\right), x^{(r+1)}_k-x^{(r)}_k\right\rangle+\frac{P_k}{2}\left\|x^{(r+1)}_k-x^{(r)}_k\right\|^2\right)\nonumber\\
&= \sum_{k=1}^{K}h_k(x_k^{(r)})-h_k(x_k^{(r+1)})-\left(\left\langle \nabla_k g\left(w^{(r+1)}_{k}\right), x^{(r+1)}_k-x^{(r)}_k\right\rangle+\frac{P_k}{2}\left\|x^{(r+1)}_k-x^{(r)}_k\right\|^2\right)\nonumber\\
&\ge \sum_{k=1}^{K}\frac{P_k}{2}\|x^{(r+1)}_k-x^{(r)}_k\|^2 \nonumber.
\end{align}
where the  second inequality uses the optimality condition \eqref{eq:opt:bcpg}. \QED
\end{proof}


To proceed, let us introduce two matrices $\tP$ and $\tA$ given below, which have dimension $K\times K$ and $MK\times NK$, respectively{\small
\begin{align}
\tP&:=\mbox{blkdg}\{P_k\}_{k=1}^{K}=\left[\begin{array}{llllll}
P_1 & 0 &  0& \cdots& 0 & 0\\
0 & P_2& 0& \cdots& 0 &0\\
\vdots&\vdots&\vdots&\cdots&\vdots&\vdots \\
0& 0& 0& \cdots& 0& P_K
\end{array}\right],\; \tA:=\mbox{blkdg}\{A_k\}_{k=1}^{K}=\left[\begin{array}{llllll}
A_1 & 0 &  0& \cdots& 0 & 0\\
0 & A_2& 0& \cdots& 0 &0\\
\vdots&\vdots&\vdots&\cdots&\vdots&\vdots \\
0& 0& 0& \cdots& 0& A_K
\end{array}\right].\nonumber
\end{align}}
By utilizing the definition of $P_k$ in \eqref{eq:Lk} we have the following inequalities (the second inequality comes from \cite[Lemma 1]{nestrov12}){
\begin{align}\label{eq:A:L}
\tP \otimes I_N\succeq \tA^T\tA, \quad K\tA^T\tA \succeq A^T A
\end{align}}
\!\!where $I_N$ is the $N\times N$ identity matrix and the notation ``$\otimes$" denotes the Kronecker product.

Next let us estimate the cost-to-go. 
\begin{lemma}\label{lemma:cost:bcpg}
 We have the following estimate of the optimality gap when using the BCPG:{
\begin{align}\label{eq:cost:bcpg}
\Delta^{(r+1)}:&=f(x^{(r+1)})-f(x^*)\nonumber\\
&\le R_0{\color{black}\log(2NK)} \left(L/\sqrt{P_{\min}}+\sqrt{P_{\max}}\right)\left\|(x^{(r+1)}-x^{(r)})(\tP^{1/2}\otimes I_N)\right\|
\end{align}}
\end{lemma}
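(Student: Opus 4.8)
The plan is to bound the optimality gap $\Delta^{(r+1)}$ by combining convexity of $g$ with the first-order optimality conditions \eqref{eq:opt:bcpg} of the $K$ block subproblems; this produces two sums of inner products, one of which is elementary while the other carries the whole difficulty and is controlled by a triangular-truncation estimate for the Hessian $A^{T}A$. Fix any $x^{*}\in X^{*}$ and write $f=g+\sum_{k}h_{k}$. Since $g$ is convex, $g(x^{(r+1)})-g(x^{*})\le\sum_{k}\langle\nabla_{k}g(x^{(r+1)}),x^{(r+1)}_{k}-x^{*}_{k}\rangle$, and putting $x_{k}=x^{*}_{k}$ in \eqref{eq:opt:bcpg} gives $h_{k}(x^{(r+1)}_{k})-h_{k}(x^{*}_{k})\le\langle\nabla_{k}g(w^{(r+1)}_{k})+P_{k}(x^{(r+1)}_{k}-x^{(r)}_{k}),\,x^{*}_{k}-x^{(r+1)}_{k}\rangle$. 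Summing the latter over $k$ and adding to the former, the $\nabla g$ terms recombine and
\[
\Delta^{(r+1)}\le\sum_{k=1}^{K}\big\langle\nabla_{k}g(x^{(r+1)})-\nabla_{k}g(w^{(r+1)}_{k}),\,x^{(r+1)}_{k}-x^{*}_{k}\big\rangle-\sum_{k=1}^{K}P_{k}\big\langle x^{(r+1)}_{k}-x^{(r)}_{k},\,x^{(r+1)}_{k}-x^{*}_{k}\big\rangle .
\]

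For the second sum, Cauchy--Schwarz together with $\sum_{k}P_{k}\|x^{(r+1)}_{k}-x^{*}_{k}\|^{2}\le P_{\max}\|x^{(r+1)}-x^{*}\|^{2}\le P_{\max}R_{0}^{2}$ bounds it by $\sqrt{P_{\max}}\,R_{0}\,\|(x^{(r+1)}-x^{(r)})(\tP^{1/2}\otimes I_{N})\|$; here $\|x^{(r+1)}-x^{*}\|\le R_{0}$ because Lemma \ref{lemma:descent:bcpg} makes $\{f(x^{(r)})\}$ nonincreasing, so $x^{(r+1)}$ lies in the level set defining $R_{0}$.

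For the first sum I would exploit the quadratic structure: from $\nabla_{k}g(x)=A^{T}_{k}(Ax-b)$ and the fact that $w^{(r+1)}_{k}$ agrees with $x^{(r+1)}$ on blocks $1,\dots,k-1$ and with $x^{(r)}$ on blocks $k,\dots,K$, one gets $\nabla_{k}g(x^{(r+1)})-\nabla_{k}g(w^{(r+1)}_{k})=\sum_{j\ge k}A^{T}_{k}A_{j}(x^{(r+1)}_{j}-x^{(r)}_{j})$, so the first sum equals $\langle U\,(x^{(r+1)}-x^{(r)}),\,x^{(r+1)}-x^{*}\rangle$, where $U$ is the block upper-triangular part (diagonal blocks included) of the Hessian $H:=A^{T}A$. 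Using $\|x^{(r+1)}-x^{*}\|\le R_{0}$ and $\|U\,(x^{(r+1)}-x^{(r)})\|\le\|U(\tP^{-1/2}\otimes I_{N})\|_{2}\,\|(x^{(r+1)}-x^{(r)})(\tP^{1/2}\otimes I_{N})\|\le(\|U\|_{2}/\sqrt{P_{\min}})\,\|(x^{(r+1)}-x^{(r)})(\tP^{1/2}\otimes I_{N})\|$, this sum is at most $(R_{0}\|U\|_{2}/\sqrt{P_{\min}})\,\|(x^{(r+1)}-x^{(r)})(\tP^{1/2}\otimes I_{N})\|$.

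The hard part is then the purely linear-algebraic claim $\|U\|_{2}\le L\,\log(2NK)$, i.e.\ that passing from $H$ to its block upper-triangular part inflates the spectral norm by at most a logarithmic factor (recall $\|H\|_{2}=L$). Writing $U=\tfrac12(H+D)+\tfrac12\,\tA^{T}(S\otimes I_{M})\tA$ with $D=\mbox{blkdg}\{A^{T}_{k}A_{k}\}_{k=1}^{K}$ and $S$ the $K\times K$ skew Toeplitz sign matrix $S_{kj}=\sgn(j-k)$ reduces this to the classical estimate $\|S\|_{2}=O(\log K)$ for the discrete conjugate-function (Hilbert) kernel, combined with $\|\tfrac12(H+D)\|_{2}\le L$ and $\|\tA\|_{2}^{2}=L_{\max}\le L$. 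Getting the constant clean enough that the bound is exactly $L\log(2NK)$ (rather than $L\cdot c\log K$ for some $c>1$) is the delicate point, and I would isolate this triangular-truncation estimate as a standalone lemma. Finally, adding the bounds on the two sums and absorbing the $\sqrt{P_{\max}}$ term using $\log(2NK)\ge1$ yields $\Delta^{(r+1)}\le R_{0}\,\log(2NK)\,(L/\sqrt{P_{\min}}+\sqrt{P_{\max}})\,\|(x^{(r+1)}-x^{(r)})(\tP^{1/2}\otimes I_{N})\|$, which is the claim.
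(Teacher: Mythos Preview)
Your argument is correct and follows the same overall strategy as the paper: use convexity of $g$ together with the block optimality conditions \eqref{eq:opt:bcpg} to reduce $\Delta^{(r+1)}$ to a bilinear form in $x^{(r+1)}-x^{(r)}$ and $x^{(r+1)}-x^*$, and then control that form via a logarithmic bound on a triangular truncation of the Hessian $A^TA$.

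The execution differs in two respects that are worth noting. First, the paper combines the two sums into a single matrix $\tA^T(D_1\otimes I_M)\tA-\tP\otimes I_N$, uses the identity $\tA^T(D_1\otimes I_M)\tA=(A^TA-\tA^T\tA)\odot D_2+\tA^T\tA$, and invokes $\tP\otimes I_N\succeq\tA^T\tA$ to isolate the strictly block--off-diagonal part $(A^TA-\tA^T\tA)\odot D_2$; it then applies the triangular-truncation estimate of Angelos et al.\ at the scalar $NK\times NK$ level. You instead bound the $P_k$-sum directly by Cauchy--Schwarz and bound the remaining sum through the full upper-triangular block $U$ via the decomposition $U=\tfrac12(H+D)+\tfrac12\,\tA^T(S\otimes I_M)\tA$ with $S$ the $K\times K$ skew sign matrix. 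Your route is arguably cleaner (it avoids the somewhat delicate step where the paper drops the contribution of $\tA^T\tA-\tP\otimes I_N$ inside a bilinear form), and because your $S$ lives at the $K\times K$ block level rather than the $NK\times NK$ scalar level, a careful version of your argument would actually yield $\log(2K)$ in place of $\log(2NK)$. Second, the paper simply cites \cite{ANGELOS92} for the truncation constant, whereas you reduce it to the classical $\|S\|_2=O(\log K)$ Hilbert-kernel bound; these are two standard proofs of the same phenomenon, and your honest caveat that matching the exact constant $\log(2NK)$ requires some care is appropriate.
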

\begin{proof}
First note that $g(x)$ being convex quadratic implies that its second order Taylor expansion is tight
\begin{align}
g(x^{*})-g(x^{(r+1)})=\langle\nabla g(x^{(r+1)}), x^* - x^{(r+1)}\rangle+\frac{1}{2}\|A (x^{(r+1)}-x^*)\|^2.
\end{align}
Using this fact we can estimate $f(x^{(r+1)})-f(x^*)$ by the following series of inequalities
\begin{align}
&f(x^{(r+1)})-f(x^*)\nonumber\\
& = \left\langle \nabla g(x^{(r+1)}), x^{(r+1)}-x^* \right\rangle + h(x^{(r+1)}) - h(x^*)-\frac{1}{2}\|A (x^{(r+1)}-x^*)\|^2 \nonumber\\
 &\stackrel{\rm (i)}\le\sum_{k=1}^{K}\left\langle \nabla_k g(x^{(r+1)}) -\nabla_k g(w^{(r+1)}_{k}), x^{(r+1)}_k-x^*_k  \right\rangle- \sum_{k=1}^{K}P_k\left\langle x^{(r+1)}_k-x^{(r)}_k,x^{(r+1)}_k-x^*_k\right\rangle-\frac{1}{2}\|A (x^{(r+1)}-x^*)\|^2 \nonumber\\
 &= \sum_{k=1}^{K}\left\langle \left(\sum_{j\ge k}A_j(x^{(r+1)}_j-x^{(r)}_j)\right), A_k(x^{(r+1)}_k-x^*_k)  \right\rangle-(x^{(r+1)}-x^{(r)})^T (\tP\otimes I_N) (x^{(r+1)}-x^{*})\nonumber\\
 &\quad -\frac{1}{2}\|A (x^{(r+1)}-x^*)\|^2 \nonumber\\
    &= (x^{(r+1)}-x^{(r)})^T\tA^T (D_1\otimes I_M) \tA (x^{(r+1)}-x^*)-(x^{(r+1)}-x^{(r)})^T (\tP\otimes I_N) (x^{(r+1)}-x^{*})\nonumber\\
    &\quad-\frac{1}{2}\|A (x^{(r+1)}-x^*)\|^2 \nonumber\\
    &\le (x^{(r+1)}-x^{(r)})^T\left(\tA^T (D_1\otimes I_M) \tA -\tP\otimes I_N\right)(x^{(r+1)}-x^*)\label{eq:cost-to-go1:bcpg}
\end{align}
where in $\rm (i)$ we have used the optimality condition of the subproblem \eqref{eq:BCPG} (i.e., \eqref{eq:opt:bcpg}); in the last equality we have defined a lower triangular matrix $D_1$
\begin{align}\label{eq:D1}
D_1:= \left[\begin{array}{llllll}
1 & 0 &  0& \cdots& 0 & 0\\
1 & 1& 0& \cdots& 0 &0\\
1& 1& 1& \cdots& 0&0\\
\vdots&\vdots&\vdots&\cdots&\vdots&\vdots \\
1& 1& 1& \cdots& 1& 1
\end{array}\right]\in\mathbb{R}^{K\times K}.
\end{align}
{\color{black} Notice that the following is true
\begin{align}
  \tA^T (D_1\otimes I_M) \tA = \left(A^T A - \tA^T \tA\right) \odot D_2 +\tA^T \tA \nonumber,
\end{align}
where ``$\odot$" denotes the Hadamard product; $D_2$ is a lower triangular matrix similarly as defined in \eqref{eq:D1}, but of dimension $KN\times KN$.} Combining this identity and \eqref{eq:cost-to-go1:bcpg}, we have
\begin{align}
  \Delta^{(r+1)}&\le \left((x^{(r+1)}-x^{(r)})(\tP^{1/2}\otimes I_N)\right)^T \bigg((\tP^{-1/2}\otimes I_N)(A^T A -\tA^T\tA) \odot D_2 \nonumber\\
  &\quad +(\tP^{-1/2}\otimes I_N)\tA^T\tA-\tP^{1/2}\otimes I_N\bigg)(x^{(r+1)}-x^*) \nonumber\\
  &\stackrel{\rm (i)}\le \left\|(x^{(r+1)}-x^{(r)})(\tP^{1/2}\otimes I_N)\right\| \left\|(\tP^{-1/2}\otimes I_N)(A^T A -\tA^T\tA)\odot D_2 \right\|\|x^{(r+1)}-x^*\|\nonumber\\
  &\stackrel{\rm (ii)}\le \left\|(x^{(r+1)}-x^{(r)})(\tP^{1/2}\otimes I_N)\right\|  \left\|(\tP^{-1/2}\otimes I_N)(A^T A-\tA^T\tA) \right\|\left(1+\frac{1}{\pi}+\frac{\log(NK)}{\pi}\right)R_0\nonumber\\
  &\stackrel{\rm (iii)}\le R_0\left\|(x^{(r+1)}-x^{(r)})(\tP^{1/2}\otimes I_N)\right\|  \left\|(\tP^{-1/2}\otimes I_N)(A^T A-\tA^T\tA)\right\|\log(2NK)\nonumber\\
  &\le R_0\log(2NK) \left(L/\sqrt{P_{\min}}+\sqrt{P_{\max}}\right)\left\|(x^{(r+1)}-x^{(r)})(\tP^{1/2}\otimes I_N)\right\|
\end{align}
 where $\rm(i)$ uses the Cauchy-Schwartz inequality and the fact that  $\tP\otimes I_N\succeq \tA^T\tA$; $\rm(iii)$ is true for all $KN\ge 3$. Inequality $\rm(ii)$ is true due to a result on the spectral norm of the triangular truncation operator; see \cite[Theorem 1]{ANGELOS92}. In particular, Define
$$Y_{KN}(D_2)= \max\left\{\frac{\|Z\odot D_2\|}{\|Z\|}: Z\in\mathbb{R}^{KN\times KN}, Z\ne 0\right\}.$$
Then we have the following estimate
$$\left|\frac{Y_{KN}(D_2)}{\log (KN)}-\frac{1}{\pi}\right|\le\left(1+\frac{1}{\pi}\right)\frac{1}{\log(KN)}.$$
The proof is completed.
\QED
\end{proof}


Our third step combines the previous two steps and characterizes the iteration complexity. 
This is the main result of this section.
{\color{black}\begin{theorem}\label{thm:bound:bcpg}
The iteration complexity of using BCPG to solve \eqref{eq:qp:ns} is given below.
\begin{enumerate}
\item  When the stepsizes are chosen conservatively as $P_k=L, \; \forall~ k$, we have{
\begin{align}\label{eq:bcpg:bound:kstep}
\Delta^{(r+1)}&\le 3\max\left\{\Delta^0  , 4\log^2(2NK) L\right\}\frac{R^2_0}{r+1}
\end{align}}
\vspace{-0.4cm}
\item When the stepsizes are chosen as  $ P_k=\lambda_{\max}(A_k^T A_k)=L_k,\ \forall~ k $.  
%
Then we have{
\begin{align}\label{eq:bcpg:bound}
\Delta^{(r+1)}& 
  \le 3\max\left\{\Delta^0  , 2\log^2(2NK)\left(L_{\max}+ \frac{L^2}{L_{\min}}\right)\right\}\frac{R^2_0}{r+1}
\end{align}}
\!In particular, if the problem is smooth and unconstrained, i.e., when $h\equiv 0$, and $X_k = \mathbb{R}^{N}, \forall~k$, then we have{
\begin{align}\label{eq:bcpg:bound:uncons}
\Delta^{(r+1)} \le3\max\left\{L, 2\log^2(2NK) \left(L_{\max}+\frac{L^2}{L_{\min}}\right)\right\}\frac{R^2_0}{r+1}. 
\end{align}}
\end{enumerate}
\end{theorem}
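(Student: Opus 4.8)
My plan is to feed the two one-step estimates just established — the sufficient-decrease inequality of Lemma~\ref{lemma:descent:bcpg} and the cost-to-go inequality of Lemma~\ref{lemma:cost:bcpg} — into a scalar recursion for the optimality gap $\Delta^{(r)}:=f(x^{(r)})-f(x^*)$, and then run a by-now-standard telescoping argument. Abbreviating $\delta_r:=\bigl\|(x^{(r+1)}-x^{(r)})(\tP^{1/2}\otimes I_N)\bigr\|$ and $\beta:=R_0\log(2NK)\bigl(L/\sqrt{P_{\min}}+\sqrt{P_{\max}}\bigr)$, Lemma~\ref{lemma:descent:bcpg} reads $\Delta^{(r)}-\Delta^{(r+1)}\ge \tfrac12\delta_r^2$ (so $\{\Delta^{(r)}\}$ is nonincreasing and nonnegative), and Lemma~\ref{lemma:cost:bcpg} reads $\Delta^{(r+1)}\le \beta\,\delta_r$. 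Eliminating $\delta_r$ between the two yields the quadratic recursion
\[ \Delta^{(r)}-\Delta^{(r+1)}\ \ge\ \frac{1}{2\beta^2}\,\bigl(\Delta^{(r+1)}\bigr)^2,\qquad r=0,1,2,\dots \]

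From this recursion I would prove, by induction on $r$, a bound of the form $\Delta^{(r)}\le c_1\max\{\Delta^0,\,2\beta^2\}/(r+1)$ for an absolute constant $c_1$ (if $\Delta^{(r)}=0$ for some $r$ the statement is trivial, so assume $\Delta^{(r)}>0$). The base case is immediate. For the inductive step I split on the size of the decrease: if $\Delta^{(r+1)}\ge\tfrac13\Delta^{(r)}$, then dividing the recursion through by $\Delta^{(r)}\Delta^{(r+1)}$ gives $\tfrac1{\Delta^{(r+1)}}-\tfrac1{\Delta^{(r)}}\ge \tfrac{\Delta^{(r+1)}}{2\beta^2\Delta^{(r)}}\ge \tfrac1{6\beta^2}$, which advances the inductive inequality by one step; if instead $\Delta^{(r+1)}<\tfrac13\Delta^{(r)}$, then $\tfrac1{\Delta^{(r+1)}}>\tfrac3{\Delta^{(r)}}$ and the induction hypothesis on $\Delta^{(r)}$ directly yields the required bound on $\Delta^{(r+1)}$. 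Shifting the index then gives $\Delta^{(r+1)}\le c_1\max\{\Delta^0,2\beta^2\}/(r+1)$; a more careful choice of the threshold in the dichotomy and bookkeeping of constants recover the precise numerical constants displayed in the theorem.

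It then remains to substitute the two stepsize rules. For the conservative choice $P_k\equiv L$ we have $P_{\min}=P_{\max}=L$, hence $\beta=2\sqrt{L}\,R_0\log(2NK)$, so $2\beta^2$ is of the form $c\,\log^2(2NK)\,L\,R_0^2$ and \eqref{eq:bcpg:bound:kstep} follows. For $P_k=L_k$, applying $(a+b)^2\le 2a^2+2b^2$ to $\beta$ gives $2\beta^2\le 4R_0^2\log^2(2NK)\bigl(L^2/L_{\min}+L_{\max}\bigr)$, which yields \eqref{eq:bcpg:bound}. Finally, in the smooth unconstrained case $h\equiv0$, $X_k=\mathbb{R}^N$, one additionally uses that $x^*$ is an interior global minimizer, so $\nabla f(x^*)=0$ and the standard descent inequality gives $\Delta^0=f(x^{(0)})-f(x^*)\le \tfrac{L}{2}\|x^{(0)}-x^*\|^2\le \tfrac{L}{2}R_0^2$; replacing $\Delta^0$ by $LR_0^2$ inside the maximum produces \eqref{eq:bcpg:bound:uncons}.

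The only genuinely delicate point is the telescoping step. The recursion $\Delta^{(r)}-\Delta^{(r+1)}\ge c\,(\Delta^{(r+1)})^2$ does not telescope directly: on an iteration where $\Delta^{(r+1)}$ is much smaller than $\Delta^{(r)}$ the increment $\tfrac1{\Delta^{(r+1)}}-\tfrac1{\Delta^{(r)}}$ is governed by the ratio $\Delta^{(r+1)}/\Delta^{(r)}$, which can be tiny, so one cannot simply sum a uniform per-step gain. The two-case dichotomy above is exactly what absorbs those ``large-decrease'' iterations, and it is what forces the $\max\{\cdot,\cdot\}$ shape of the final bound — the $\Delta^0$ term coming from the first, possibly large, drop. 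Everything else, namely eliminating $\delta_r$ and plugging in the Lipschitz-constant estimates, is routine.
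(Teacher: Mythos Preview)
Your proposal is correct and follows the same approach as the paper: combine Lemmas~\ref{lemma:descent:bcpg} and~\ref{lemma:cost:bcpg} to obtain the scalar recursion $(\Delta^{(r+1)})^2\le 2\beta^2(\Delta^{(r)}-\Delta^{(r+1)})$, convert this to an $\mathcal{O}(1/r)$ rate, and then specialize to the two stepsize choices (with the convexity/Lipschitz bound on $\Delta^{(0)}$ in the smooth unconstrained case). The only difference is that the paper outsources the recursion-to-rate step to \cite[Lemma~3.5]{Beck13b}, whereas you supply the two-case dichotomy argument directly; your threshold $1/3$ indeed recovers the constant $c_1=3$ of the theorem.
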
}
\begin{proof}
For notational simplicity, let us define
$$C:=R_0\log(2NK)\left(L/\sqrt{P_{\min}}+\sqrt{P_{\max}}\right).$$
Taking a square of the cost-to-go estimate \eqref{eq:cost:bcpg} and the sufficient descent estimate \eqref{eq:descent:bcpg}, we obtain
\begin{align*}
(\Delta^{(r+1)})^2&\le C^2\left\|(x^{(r+1)}-x^{(r)})(\tP^{1/2}\otimes I_N)\right\|^2\nonumber\\
&= C^2\sum_{k=1}^{K}P_k\|x_k^{(r+1)}-x_k^{(r)}\|^2\nonumber\\
&\le 2{C^2}(\Delta^{(r)}-\Delta^{(r+1)}).
\end{align*}
Utilizing a result from \cite[Lemma 3.5]{Beck13b}, the above inequality implies that
\begin{align}
\Delta^{(r+1)}&\le 3\max\left\{\Delta^0  , {2C^2}\right\}\frac{1}{r+1}\nonumber\\
&\le 3\max\left\{\Delta^0  , 2\log^2(2NK)\left(P_{\max}+\frac{L^2}{P_{\min}}\right) R^2_0\right\}\frac{1}{r+1}
\end{align}
When $P_k=L$ for all $k$, the bound reduces to
\begin{align}
\Delta^{(r+1)}&\le 3\max\left\{\Delta^0  , 4\log^2(2NK)R^2_0 L\right\}\frac{1}{r+1}
\end{align}
When the problem is smooth and unconstrained, we have
$$\Delta^{(0)}\le \langle \nabla f(x^{(0)}), x^{(0)}-x^* \rangle = \langle \nabla f(x^{(0)})-f(x^*), x^{(0)}-x^* \rangle\le L\|x^{(0)}-x^*\|^2.$$
This completes the proof. \QED
\end{proof}

\vspace{-0.2cm}
We comment on the bounds derived in the above theorem.
The bound for BCPG with uniform ``conservative" stepsize $1/L$ has the same order as the GD method, except for the $\log^2(2NK)$ factor (cf. \eqref{eq:complexity:gd}). In \cite[Corollary 3.2]{Beck13}, it is shown that the BCGD with the same ``conservative" stepsize achieves a sublinear rate with a constant of $4L(1+K) R^2_0$, which is about $K/(3\log^2(2NK))$ times worse than our bound. Further, our bound has the same dependency on $L$ (i.e., $12L$ v.s. $L/2$) as the one derived in \cite{Saha10} for BCPG with a ``conservative" stepsize to solve an $\ell_1$ penalized quadratic problem with special data matrix, but our bound holds true for a much larger class of problems (i.e., all quadratic nonsmooth problem in the form of \eqref{eq:qp:ns}). However, in practice such conservative stepsize is slow (compared with BCPG with $P_k=L_k$, for all $k$) hence is rarely used.


The bounds derived in Theorem \ref{thm:bound:bcpg} is again at least $K/\log^2(2NK)$ times better than existing bounds of cyclic BCPG. For example, when the problem is smooth and unconstrained, the ratio between our bound \eqref{eq:bcpg:bound:uncons} and the bound \eqref{eq:complexity:bcd} is given by{
\begin{align}
\frac{ 6 R^2_0\log^2(2NK)(L^2/L_{\min}+L_{\max})}{ C L_{\max}(1+K L^2/L^2_{\min}) R^2_0} \le \frac{6\log^2(2NK)(1+L^2/(L_{\min}L_{\max}))}{C(1+KL^2/L^2_{\min})}= \cO(\log^2(2NK)/K)
\end{align}}
\!\!where in the last inequality we have used the fact that $L_{\max}/L_{\min}\ge 1$. 

{For unconstrained smooth problems, let us compare the bound derived in the second part of Theorem \ref{thm:bound:bcpg} (stepsize $P_k = L_k, \forall k$) with that of the GD \eqref{eq:complexity:gd}. If $L = KL_k$ for all $k$ (problem badly conditioned), our bound is about $K \log^2(2NK)$ times worse than that of the GD.
 This indicates a counter-intuitive phenomenon: by choosing conservative stepsize $P_k = L, \forall k$ the iteration complexity of BCGD
 is $K$ times better compared with choosing a more aggressive stepzise $P_k  = L_k, \forall k$.
 It also indicates that the factor $L/L_{\min}$ may hide an additional factor of $K$.}

\section{Improved Bounds of Cyclic BCD for Quadratic Problems}\label{sec:BCD}
In the previous section, we analyzed an inexact cyclic BCD algorithm, the BCPG algorithm. In this section we analyze the performance of the cyclic BCD algorithm (with exact minimization, cf. \eqref{eq:BCM}), when applied to the quadratic problem \eqref{eq:qp:ns}.

We will divide our analysis into three cases ($\lambda_{\min}$ denotes the minimum eigenvalue):
\begin{enumerate}
  \item Each $A_k$ has full column rank with $\lambda_{\min}(A^T_k A_k)\ge \sigma^2_k$, where $\sigma_k>0$ is some known constant;
  \item Each $A_k$ has full row rank with $\lambda_{\min}(A_k A^T_k)\ge \gamma_k^2$, where $\gamma_k>0$ is some known constant;
  \item Each $A_k$ has neither full column rank nor full row rank.
\end{enumerate}
Note that in the last two cases the subproblems are not strongly convex hence may not have unique solutions. Without uniqueness, the only known iteration complexity bound for the cyclic BCD is developed in \cite{hong13complexity}, but such bound is at least proportional to $K^2$.

Our analysis again follows the three-step approach used in the previous section.
To estimate the descent, we have the following lemma.
\begin{lemma}\label{lm:descent:bcd}
 We have the following estimate of the descent when using the BCD
\begin{align}
f(x^{(r)})-f(x^{(r+1)})\ge \frac{1}{2} (x^{(r+1)}-x^{(r)})^T\tA^T \tA (x^{(r+1)}-x^{(r)})\label{eq:descent}.
\end{align}
\end{lemma}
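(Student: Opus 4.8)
\textbf{Proof proposal for Lemma \ref{lm:descent:bcd}.}

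The plan is to mimic the telescoping-descent argument used in Lemma \ref{lemma:descent:bcpg}, replacing the proximal upper model by the exact quadratic. First I would write the total decrease across one cycle as a telescoping sum over the intermediate iterates $w^{(r+1)}_k$:
\[
f(x^{(r)})-f(x^{(r+1)})=\sum_{k=1}^{K}\left(f(w^{(r+1)}_k)-f(w^{(r+1)}_{k+1})\right),
\]
so it suffices to lower-bound each per-block decrease $f(w^{(r+1)}_k)-f(w^{(r+1)}_{k+1})$. Since only the $k$-th block changes between $w^{(r+1)}_k$ and $w^{(r+1)}_{k+1}$, and since $g$ is the quadratic $\tfrac12\|\sum_j A_j x_j-b\|^2$, the exact second-order Taylor expansion in that block is tight: writing $d_k:=x^{(r+1)}_k-x^{(r)}_k$,
\[
f(w^{(r+1)}_k)-f(w^{(r+1)}_{k+1})=-\langle\nabla_k g(w^{(r+1)}_k),d_k\rangle-\tfrac12\|A_k d_k\|^2 + h_k(x^{(r)}_k)-h_k(x^{(r+1)}_k).
\]

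Next I would invoke the optimality condition for the exact subproblem \eqref{eq:BCM}, namely $\langle \nabla_k g(w^{(r+1)}_k)+(\text{subgradient of }h_k\text{ at }x^{(r+1)}_k),\,x_k-x^{(r+1)}_k\rangle\ge 0$ for all $x_k\in X_k$; taking $x_k=x^{(r)}_k$ gives $-\langle\nabla_k g(w^{(r+1)}_k),d_k\rangle + h_k(x^{(r)}_k)-h_k(x^{(r+1)}_k)\ge 0$. Wait — here one must be a little careful, since the exact minimizer of a non-strongly-convex quadratic plus $h_k$ need not be unique and the gradient of $g$ at the chosen minimizer may not be the one appearing after the step; the cleanest route is instead to note directly that, $g(\cdot,w^{(r+1)}_{-k})+h_k(\cdot)$ being convex and $x^{(r+1)}_k$ being its minimizer over $X_k$, we have $f(w^{(r+1)}_{k+1})\le \phi(x^{(r)}_k)$ where $\phi$ is the one-block quadratic restriction; expanding $\phi(x^{(r)}_k)-\phi(x^{(r+1)}_k)$ exactly (again the Taylor expansion is tight because $\phi$ is quadratic with block Hessian $A_k^TA_k$) yields $f(w^{(r+1)}_k)-f(w^{(r+1)}_{k+1})\ge \tfrac12\, d_k^T A_k^T A_k\, d_k$, using convexity of $h_k$ to drop the first-order optimality term with the correct sign. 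Summing over $k$ and recognizing $\sum_k d_k^T A_k^T A_k d_k = (x^{(r+1)}-x^{(r)})^T\tA^T\tA\,(x^{(r+1)}-x^{(r)})$ because $\tA$ is block-diagonal gives \eqref{eq:descent}.

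The main obstacle I anticipate is precisely this handling of non-uniqueness / possible non-differentiability at the subproblem minimizer: one cannot blindly write a first-order optimality condition with a single gradient as in the smooth BCPG case, so the argument should be phrased purely in terms of "minimizer beats competitor" (evaluating the exact one-block objective at $x^{(r)}_k$) together with the tightness of the quadratic expansion, rather than in terms of an explicit gradient inequality. Everything else — the telescoping and the reassembly into $\tA^T\tA$ — is routine.
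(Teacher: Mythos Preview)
Your proposal is correct and follows essentially the same route as the paper: telescope over the intermediate iterates, use that the quadratic Taylor expansion of $g$ in the $k$-th block is exact, and invoke first-order optimality of the exact subproblem to drop the linear-plus-$h_k$ terms, leaving $\tfrac12\|A_k d_k\|^2$. Your detour and worry about non-uniqueness is unnecessary, though: the paper simply expands $g$ around the \emph{new} point $w^{(r+1)}_{k+1}$ from the start (your initial expansion was around $w^{(r+1)}_k$, and your ``optimality condition'' there used the wrong gradient $\nabla_k g(w^{(r+1)}_k)$ rather than $\nabla_k g(w^{(r+1)}_{k+1})$ --- that, not non-uniqueness, is why it did not close immediately). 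Since $g$ is smooth, $\nabla_k g(w^{(r+1)}_{k+1})$ is well-defined for any chosen minimizer and the variational inequality $\langle \nabla_k g(w^{(r+1)}_{k+1}),\,x_k-x^{(r+1)}_k\rangle + h_k(x_k)-h_k(x^{(r+1)}_k)\ge 0$ holds regardless of uniqueness, which is exactly what both you (in your second pass) and the paper use.
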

\begin{proof} 
We have the following series of inequalities
\begin{align*}
&f(x^{(r)})-f(x^{(r+1)})\\
& = \sum_{k=1}^{K} f(w_k^{(r+1)})-f(w^{(r+1)}_{k+1})\\
& \stackrel{\rm (i)}= \sum_{k=1}^{K}\langle \nabla_k g(w^{(r+1)}_{k+1}), x^{(r)}_k-x^{(r+1)}_k \rangle+\frac{1}{2}\sum_{k=1}^{K}\|A_k (x^{(r+1)}_k-x^{(r)}_k)\|^2 + \sum_{k=1}^{K}h_k(x_k^{(r)})-h_k(x_k^{(r+1)})\\
&\stackrel{\rm(ii)}\ge \frac{1}{2}\sum_{k=1}^{K}\|A_k (x^{(r+1)}_k-x^{(r)}_k)\|^2\\
&=\frac{1}{2} (x^{(r+1)}-x^{(r)})^T\tA^T \tA (x^{(r+1)}-x^{(r)})\nonumber
\end{align*}
where in $\rm (i)$ we have used the fact that the second order Taylor expansion of a quadratic problem is exact; in $\rm(ii)$ we have used the optimality of $w_{k+1}^{(r+1)}$; cf. \eqref{eq:opt:bcpg}. \QED
\end{proof}

The cost-to-go estimate is given by the following lemma.
\begin{lemma}\label{lemma:cost:bcd}
 Let $\sigma_{\min}:=\min_i\{\sigma_i\}$ and $\gamma_{\min}:=\min_i\{\gamma_i\}$ . We have the following estimate of the optimality gap when using the cyclic BCD to solve \eqref{eq:qp:ns}.
 \begin{enumerate}
   \item When each $A_k$ has full column rank, we have
   \begin{align}
  \|f(x^{(r+1)})-f(x^*)\|&\le R_0\frac{1}{\sigma_{\min}}\log(2NK)\left( L   +L_{\max}\right)\left\|(x^{(r+1)}-x^{(r)})(\Sigma\otimes I_N)\right\|\label{eq:cost:case1}
\end{align}
   \item When each $A_k$ has full row rank, we have
      \begin{align}
  \|f(x^{(r+1)})-f(x^*)\|&\le R_0 \left(\frac{1}{\gamma_{\min}}\log(2NK)(L + {L_{\max}})\right)\left\|(x^{(r+1)}-x^{(r)})\tA^T\right\|\label{eq:cost:case2}
\end{align}
   \item When each $A_k$ has neither full column rank nor full row rank, we have
      \begin{align}
  \|f(x^{(r+1)})-f(x^*)\|&\le R_0\sqrt{L_{\max}}\left(K+2\right)\left\|(x^{(r+1)}-x^{(r)})\tA^T\right\|\label{eq:cost:case3}
\end{align}
 \end{enumerate}

\end{lemma}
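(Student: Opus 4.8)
The plan is to run the three-step scheme of Section~\ref{sec:BCPG}: the sufficient-descent bound is Lemma~\ref{lm:descent:bcd}, the cost-to-go step is this lemma, and the theorem to follow combines them. All three cases come out of a single inequality, which I would establish first and then specialize.

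To get the common inequality I follow the proof of Lemma~\ref{lemma:cost:bcpg} essentially verbatim. Exactness of the second-order Taylor expansion of the quadratic $g$ at $x^{(r+1)}$ gives $f(x^{(r+1)})-f(x^*)=\langle\nabla g(x^{(r+1)}),x^{(r+1)}-x^*\rangle+h(x^{(r+1)})-h(x^*)-\tfrac12\|A(x^{(r+1)}-x^*)\|^2$. Splitting $\nabla_kg(x^{(r+1)})=\nabla_kg(w^{(r+1)}_{k+1})+\big(\nabla_kg(x^{(r+1)})-\nabla_kg(w^{(r+1)}_{k+1})\big)$ and using the optimality of the exact block update at $x_k=x^*_k$ (the analogue of \eqref{eq:opt:bcpg} with $P_k=0$ and the gradient evaluated at the already-updated point $w^{(r+1)}_{k+1}$), the terms carrying $\nabla_kg(w^{(r+1)}_{k+1})$ together with $h(x^{(r+1)})-h(x^*)$ sum to a nonpositive quantity. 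Since $\nabla_kg(x)=A_k^T(Ax-b)$ and $w^{(r+1)}_{k+1}$ agrees with $x^{(r+1)}$ except in blocks indexed $>k$, the leftover difference is $A_k^T\sum_{j>k}A_j(x^{(r+1)}_j-x^{(r)}_j)$. Dropping the nonpositive $-\tfrac12\|A(x^{(r+1)}-x^*)\|^2$, this yields
\[
f(x^{(r+1)})-f(x^*)\ \le\ \sum_{k=1}^K\left\langle \sum_{j>k}A_j\big(x^{(r+1)}_j-x^{(r)}_j\big),\ A_k\big(x^{(r+1)}_k-x^*_k\big)\right\rangle = (x^{(r+1)}-x^{(r)})^T G_L\,(x^{(r+1)}-x^*),
\]
where $G_L:=\tA^T(S_K\otimes I_M)\tA=(A^TA-\tA^T\tA)\odot D_2$ is the strictly block-lower-triangular truncation of $A^TA-\tA^T\tA$ and $S_K:=D_1-I_K$; the Hadamard identity is the one from Lemma~\ref{lemma:cost:bcpg} minus the diagonal of $D_1$, because exact minimization uses the updated gradient. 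Two ingredients carry over unchanged: $\|x^{(r+1)}-x^*\|\le R_0$ (Lemma~\ref{lm:descent:bcd} keeps $f(x^{(r+1)})\le f(x^{(0)})$), and $\|G_L\|\le Y_{KN}(D_2)\|A^TA-\tA^T\tA\|\le\log(2NK)\,(L+L_{\max})$, using the triangular-truncation norm bound of \cite[Theorem~1]{ANGELOS92} and $\|A^TA-\tA^T\tA\|\le\|A^TA\|+\|\tA^T\tA\|=L+L_{\max}$. Finally, $G_L=\tA^T(S_K\otimes I_M)\tA$ gives $(x^{(r+1)}-x^{(r)})^T G_L(x^{(r+1)}-x^*)=\big(\tA(x^{(r+1)}-x^{(r)})\big)^T z$ with $z:=(S_K\otimes I_M)\tA(x^{(r+1)}-x^*)$.

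Now the three cases. For case~1 ($A_k$ full column rank), rather than pulling out $\tA$ I insert $\Sigma\otimes I_N$, $\Sigma=\mathrm{blkdg}\{\sigma_k\}$, into the increment: $(x^{(r+1)}-x^{(r)})^TG_L(x^{(r+1)}-x^*)=\big((\Sigma\otimes I_N)(x^{(r+1)}-x^{(r)})\big)^T(\Sigma^{-1}\otimes I_N)G_L(x^{(r+1)}-x^*)$, and Cauchy--Schwarz with $\|\Sigma^{-1}\|=1/\sigma_{\min}$ gives \eqref{eq:cost:case1}. For case~2 ($A_k$ full row rank), the key observation is that $z=(\tA^\dagger)^TG_L(x^{(r+1)}-x^*)$, where $\tA^\dagger:=\tA^T(\tA\tA^T)^{-1}$ is the blockwise right inverse (well defined since each $A_kA_k^T$ is invertible, $\tA\tA^\dagger=I$); hence $\|z\|\le\|\tA^\dagger\|\,\|G_L\|\,\|x^{(r+1)}-x^*\|\le\frac1{\gamma_{\min}}\log(2NK)(L+L_{\max})R_0$, and $\big(\tA(x^{(r+1)}-x^{(r)})\big)^Tz\le\|\tA(x^{(r+1)}-x^{(r)})\|\,\|z\|$ gives \eqref{eq:cost:case2}. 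For case~3 there is no right inverse, so I bound $\|z\|$ directly: $\|z\|\le\|S_K\|\,\|\tA(x^{(r+1)}-x^*)\|\le(K+2)\sqrt{L_{\max}}\,R_0$, using $\|S_K\|\le K+2$ and $\|\tA(x^{(r+1)}-x^*)\|^2=\sum_k\|A_k(x^{(r+1)}_k-x^*_k)\|^2\le L_{\max}R_0^2$; this is \eqref{eq:cost:case3}.

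The only step that is not mechanical is case~2: a crude treatment of the cross-term (as in case~3) costs a factor $\|S_K\|=\Theta(K)$, and the improvement to a $\log(2NK)$-type constant hinges on recognizing that $G_L(x^{(r+1)}-x^*)$ lies in the block-row space of $\tA$, so that the per-block right inverse absorbs $1/\gamma_{\min}$ without any dimension-dependent loss. The remaining work --- checking that exact BCD produces the strictly-lower (not lower-with-diagonal) triangular pattern, re-deriving the Hadamard identity for this pattern, and recycling the truncation-norm estimate and the monotonicity of $\{f(x^{(r)})\}$ --- needs care but no new ideas.
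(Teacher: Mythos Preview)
Your proposal is correct and follows essentially the same approach as the paper: you derive the same strictly-lower-triangular cross term $(x^{(r+1)}-x^{(r)})^T\tA^T(D_3\otimes I_M)\tA(x^{(r+1)}-x^*)$ (your $S_K$ is the paper's $D_3$), invoke the same Hadamard identity and the triangular-truncation norm bound from \cite{ANGELOS92}, and then handle the three cases exactly as the paper does---inserting $\Sigma\otimes I_N$ in case~1, passing through the blockwise pseudoinverse $\tA^\dagger$ in case~2, and bounding $\|D_3\|$ crudely in case~3. Your write-up of case~2 (factoring $z=(\tA^\dagger)^TG_L(x^{(r+1)}-x^*)$) is arguably cleaner than the paper's, but the substance is identical.
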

\begin{proof}
We again bound $\Delta^{(r+1)}=f(x^{(r+1)})-f(x^*)$. First we have{
\begin{align}
&f(x^{(r+1)})-f(x^*)\nonumber\\
& \le \langle \nabla g(x^{(r+1)}), x^{(r+1)}-x^* \rangle +\sum_{k=1}^{K}h_k(x^{(r+1)}_k)-h_k(x^*_k)\nonumber\\
 &\stackrel{\rm (i)}\le\sum_{k=1}^{K}\langle \nabla_k g(x^{(r+1)}) -\nabla_k g(w^{(r+1)}_{k+1}), x^{(r+1)}_k-x^*_k  \rangle\nonumber\\
 &=\sum_{k=1}^{K}\left\langle \left(\sum_{j>k}A_j(x^{(r+1)}_j-x^{(r)}_j)\right), A_k(x^{(r+1)}_k-x^*_k)  \right\rangle\nonumber\\
    &\stackrel{\rm (ii)}= (x^{(r+1)}-x^{(r)})^T\tA^T (D_3\otimes I_M) \tA (x^{(r+1)}-x^*)\nonumber
    \label{eq:cost-to-go1}
\end{align}}
\!\!where in $\rm (i)$ we have used the optimality condition of $w_{k+1}^{(r+1)}$ (cf. \eqref{eq:opt:bcpg}); in $\rm (ii)$ $D_3$ is the following lower triangular matrix
\begin{align}
D_3:= \left[\begin{array}{llllll}
0 & 0 &  0& \cdots& 0 & 0\\
1 & 0& 0& \cdots& 0 &0\\
1& 1& 0& \cdots& 0&0\\
\vdots&\vdots&\vdots&\cdots&\vdots&\vdots \\
1& 1& 1& \cdots& 1& 0
\end{array}\right]\in\mathbb{R}^{K\times K}.
\end{align}
Below we bound $\|f(x^{(r+1)})-f(x^*)\|$ for three different cases.

First, suppose that $\lambda_{\min}(A^T_k A_k)\ge \sigma^2_k$ for all $k$. Define $\Sigma: = \mbox{diag}[\sigma_1,\cdots, \sigma_k]\in\mathbb{R}^{K\times K}$. We have
\begin{align}
  \|f(x^{(r+1)})-f(x^*)\|&\le R_0\left\|(\Sigma^{-1}\otimes I_N)\left(\tA^T(D_3\otimes I_M)\tA\right)\right\|\left\|(x^{(r+1)}-x^{(r)})(\Sigma\otimes I_N)\right\|\nonumber\\
  &= R_0\left\|(\Sigma^{-1}\otimes I_N)\left((A^T A-\tA^T\tA) \odot D_2\right)\right\|\left\|(x^{(r+1)}-x^{(r)})(\Sigma\otimes I_N)\right\|\nonumber\\
  &\le R_0\left\|(x^{(r+1)}-x^{(r)})(\Sigma\otimes I_N)\right\|\frac{1}{\sigma_{\min}}\log(2NK)\left( L   +L_{\max}\right)
\end{align}
where $D_2$ is the $KN\times KN$ lower triangular matrix; in the last inequality we have again used the property of lower triangular truncation operator; see the proof of Lemma \ref{lemma:cost:bcpg}.

Second, suppose that $\lambda_{\min}(A_k A^T_k)\ge \gamma^2_k$ for all $k$. Define $\Gamma: = \mbox{diag}[\gamma_1,\cdots, \gamma_k]\in\mathbb{R}^{K\times K}$. We have
\begin{align}
  &\|f(x^{(r+1)})-f(x^*)\|\nonumber\\
  &\le R_0\left(\left\|(D_3\otimes I_M)\tA\right\|\right)\left\|(x^{(r+1)}-x^{(r)})\tA^T\right\|\nonumber\\
  &\stackrel{\rm(i)}\le R_0\left\|(x^{(r+1)}-x^{(r)})\tA^T\right\| \left\|\tA^{\dagger}\tA^T(D_3\otimes I_M)\tA\right\|\nonumber\\
  &\stackrel{\rm(ii)}\le R_0\left\|(x^{(r+1)}-x^{(r)})\tA^T\right\| \left(\|\tA^{\dagger}\|\log(2NK)(L+{L_{\max}}) \right)\nonumber\\
  &\le R_0\left\|(x^{(r+1)}-x^{(r)})\tA^T\right\| \left(\frac{1}{\gamma_{\min}}\log(2NK)(L + {L_{\max}})\right)\nonumber
\end{align}
where in ${\rm (i)}$ $\tA^{\dagger}$ is the Moore-Penrose pseudoinverse of $\tA$, and we have utilized the fact that when $A_kA^T_k$ is invertible, its  Moore-Penrose pseudoinverse  is given by $(A_kA^T_k)^{-1}A_k$; in ${\rm (ii)}$ we again have used the property of lower triangular truncation operator; see the proof of Lemma \ref{lemma:cost:bcpg}.

Third, for general $A_k$'s, we have
\begin{align}
  &\|f(x^{(r+1)})-f(x^*)\|\nonumber\\
  &\le R_0\left\|(D_3\otimes I_M)\tA\right\|\left\|(x^{(r+1)}-x^{(r)})\tA^T\right\|\nonumber\\
  &\le R_0\sqrt{L_{\max}}\left(K+1\right)\left\|(x^{(r+1)}-x^{(r)})\tA^T\right\|\nonumber.
\end{align}
This completes the proof.
\end{proof}


The third step again combines the previous two steps to derive the final bounds.
\begin{theorem}\label{thm:bound:bcd}
The iteration complexity of using cyclic BCD to solve \eqref{eq:qp:ns} is given below.

 \begin{enumerate}
   \item When each $A_k$ has full column rank, we have
   \begin{align}
\Delta^{(r+1)}\le3\max\left\{\Delta^0, 2R^2_0\frac{1}{\sigma^2_{\min}}\log^2(2NK)\left( L^2   +L^2_{\max}\right)\right\}\frac{1}{r+1}
\end{align}
   \item When each $A_k$ has full row rank, we have
      \begin{align}
\Delta^{(r+1)}\le3\max\left\{\Delta^0, 2R^2_0\left( \frac{1}{\gamma^2_{\min}}\log^2(2NK)(L^2   +L^2_{\max})\right)\right\}\frac{1}{r+1}
\end{align}
   \item When each $A_k$ has neither full column rank nor full row rank, we have
      \begin{align}
\Delta^{(r+1)}\le 3\max\left\{\Delta^0, 2R^2_0L_{\max}\left(1+K^2\right)\right\}\frac{1}{r+1}
\end{align}
 \end{enumerate}

\end{theorem}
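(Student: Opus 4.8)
The plan is to reuse the three-step scheme of Theorem~\ref{thm:bound:bcpg} essentially verbatim: Lemmas~\ref{lm:descent:bcd} and~\ref{lemma:cost:bcd} already supply Steps~1 and~2, so only Step~3 — combining a squared cost-to-go estimate with the descent and unrolling the resulting recursion — remains. Concretely, in each of the three cases I would square the corresponding cost-to-go bound of Lemma~\ref{lemma:cost:bcd}, substitute the descent of Lemma~\ref{lm:descent:bcd}, arrive at an inequality of the form $(\Delta^{(r+1)})^2 \le B\,(\Delta^{(r)}-\Delta^{(r+1)})$ with an explicit constant $B$, and conclude $\Delta^{(r+1)}\le 3\max\{\Delta^0,B\}/(r+1)$ via \cite[Lemma~3.5]{Beck13b}; the monotonicity of $\{\Delta^{(r)}\}$ needed to apply that lemma is immediate from \eqref{eq:descent}.

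The only case-dependent bookkeeping is to express the descent $\Delta^{(r)}-\Delta^{(r+1)}\ge \tfrac12(x^{(r+1)}-x^{(r)})^T\tA^T\tA(x^{(r+1)}-x^{(r)})$ through the same norm that appears on the right-hand side of the matching cost-to-go estimate. For the full-row-rank case \eqref{eq:cost:case2} and the general case \eqref{eq:cost:case3} this is automatic, since $(x^{(r+1)}-x^{(r)})^T\tA^T\tA(x^{(r+1)}-x^{(r)})=\|(x^{(r+1)}-x^{(r)})\tA^T\|^2$ is literally the quantity occurring there, so $\|(x^{(r+1)}-x^{(r)})\tA^T\|^2\le 2(\Delta^{(r)}-\Delta^{(r+1)})$ can be plugged in directly. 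For the full-column-rank case \eqref{eq:cost:case1} I would first invoke the block-diagonal ordering $\tA^T\tA=\mbox{blkdg}\{A_k^TA_k\}\succeq\mbox{blkdg}\{\sigma_k^2 I_N\}=\Sigma^2\otimes I_N$, valid because $\lambda_{\min}(A_k^TA_k)\ge\sigma_k^2$, to obtain $\|(x^{(r+1)}-x^{(r)})(\Sigma\otimes I_N)\|^2\le 2(\Delta^{(r)}-\Delta^{(r+1)})$, which is exactly the $\Sigma$-weighted norm appearing in \eqref{eq:cost:case1}.

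Carrying this out yields $(\Delta^{(r+1)})^2\le 2R_0^2\sigma_{\min}^{-2}\log^2(2NK)(L+L_{\max})^2(\Delta^{(r)}-\Delta^{(r+1)})$ in Case~1, the same inequality with $\gamma_{\min}$ in place of $\sigma_{\min}$ in Case~2, and $(\Delta^{(r+1)})^2\le 2R_0^2L_{\max}(K+1)^2(\Delta^{(r)}-\Delta^{(r+1)})$ in Case~3. Splitting $(L+L_{\max})^2\le 2(L^2+L_{\max}^2)$ and $(K+1)^2\le 2(1+K^2)$, and then invoking \cite[Lemma~3.5]{Beck13b}, produces the three stated bounds, the absolute constants being absorbed into the leading factor $3$. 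Since none of the lemmas uses a particular within-cycle order — relabeling the blocks merely permutes the triangular matrix $D_3$ without changing the spectral-norm estimate used for it — the same bounds hold for randomly permuted BCD.

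I do not expect a serious obstacle here, because essentially all analytic content (the exact quadratic Taylor expansion, the per-block optimality conditions, and the logarithmic spectral-norm bound for the triangular truncation operator, \cite[Theorem~1]{ANGELOS92}) is already discharged inside Lemmas~\ref{lm:descent:bcd} and~\ref{lemma:cost:bcd}. The two points needing care are getting the positive-semidefinite ordering right in Case~1 so that the descent genuinely dominates the $\Sigma$-weighted norm, and accepting that in Case~3, with no rank assumption available, the crude bound $\|(D_3\otimes I_M)\tA\|\le(K+1)\sqrt{L_{\max}}$ is the best one can extract — so the resulting rate is intrinsically $O(K^2/r)$, and the $\log$-only improvement obtained in Section~\ref{sec:BCPG} does not carry over to exact BCD without spectral structure on the blocks.
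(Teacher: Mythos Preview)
Your proposal is correct and matches the paper's own proof essentially line for line: the paper also lower-bounds the descent by the $\Sigma$-weighted norm in Case~1 via $\tA^T\tA\succeq\Sigma^2\otimes I_N$, uses the identity $(x^{(r+1)}-x^{(r)})^T\tA^T\tA(x^{(r+1)}-x^{(r)})=\|\tA(x^{(r+1)}-x^{(r)})\|^2$ directly in Cases~2 and~3, squares the cost-to-go, splits $(L+L_{\max})^2\le 2(L^2+L_{\max}^2)$, and invokes \cite[Lemma~3.5]{Beck13b}. Your observation about order-independence within a cycle is also consistent with the paper's stated scope.
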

\begin{proof}
First, consider the case where $A_k$'s all have full column rank. In this case the descent estimate \eqref{eq:descent} can be further expressed as
\begin{align}
\Delta^{(r)}-\Delta^{(r+1)}\ge \sum_{k=1}^{K}\frac{1}{2\sigma^2_{k}} \|x_k^{(r+1)}-x_k^{(r)}\|^2=\left\|(x_k^{(r+1)}-x_k^{(r)})^T (\Sigma^2\otimes I_N) (x_k^{(r+1)}-x_k^{(r)})\right\|^2.
\end{align}
Using this relation and squaring both sides of \eqref{eq:cost:case1}, we obtain
\begin{align}
  \|\Delta^{(r+1)}\|^2&\le 4R^2_0\frac{1}{\sigma^2_{\min}}\log^2(2NK)\left( L^2   +L^2_{\max}\right)(\Delta^{(r)}-\Delta^{(r+1)})\label{eq:recursion:1}
\end{align}

Second, when $A_k$'s all have full row rank, we can simply combine \eqref{eq:descent}  and \eqref{eq:cost:case2} and obtain
\begin{align}
  \|\Delta^{(r+1)}\|^2&\le 4R^2_0\frac{1}{\gamma^2_{\min}}\log^2(2NK)\left( L^2   +L^2_{\max}\right)(\Delta^{(r)}-\Delta^{(r+1)})\label{eq:recursion:2}
\end{align}

Third, when $A_k$'s all neither full row rank nor full column rank, we can combine \eqref{eq:descent}  and \eqref{eq:cost:case3} and obtain
\begin{align}
  \|\Delta^{(r+1)}\|^2&\le 4R_0^2L_{\max}\left(1+K^2\right)(\Delta^{(r)}-\Delta^{(r+1)})\label{eq:recursion:3}
\end{align}

Again by utilizing  result from \cite[Lemma 3.5]{Beck13b} to the recursions \eqref{eq:recursion:1}--\eqref{eq:recursion:3} we obtained the desired results.  \QED

\end{proof}

We note that for the special case where $X_k\in\mathbb{R}$ for all $k$ (scalar variable block), and $A_k\ne 0$ for all $k$, then each $A_k$ is a scalar and we have $L_k=\|A^T_k A_k\|$ for all $k$. In this case, $\sigma^2_{\min}=\delta^2_{\min}=L_{\min}$, and we can take the minimum of the three bounds presented in Theorem \ref{thm:bound:bcd}, which has the form
      \begin{align}
\Delta^{(r+1)}\le3\max\left\{\Delta^0, 2R^2_0 \log^2(2NK)\left(\frac{L^2}{L_{\min}}   +\frac{L^2_{\max}}{L_{\min}}\right)\right\}\frac{1}{r+1}
\end{align}
Except for the log factor, such bound is again no longer explicitly  dependent on $K$, and is about $K/\log^2(2NK)$ times better than the existing bound on the cyclic BCD \cite{hong13complexity}.


\subsection{Discussion on the Tightness in terms of $K$}   
Viewing $L/L_{\min}$ and $R_0^2$ as constants,
 the bound derived in \eqref{eq:bcpg:bound:uncons} for BCPG is $\mathcal{O}(\log^2(2K){L}/{r})$.
An immediate question is: is it possible to further reduce these bounds by an additional order of $K$, i.e. $\mathcal{O}(\log^2(2K){L}/{(K r) })$?
The answer is negative at least for $r = 1$, as shown below.
Note that the analysis below does not exclude the possibility that BCD can achieve an iteration complexity $ \mathcal{O}({L}/{ r^2 })  $
or $\mathcal{O}(c^r )  $, where $c<1$ is a constant depending on $L$.
Finding the iteration complexity lower bound of exact BCD for smooth quadratic minimization remains an interesting open question.

Consider the following problem
\begin{align}
\min \quad g(x):=\|A x\|^2\label{eq:example}
\end{align}
where ${A}\in\mathbb{R}^{K\times K}$ is a square tridiagonal Toeplitz matrix of the following form
\begin{align}\label{eq:A}
A=\left [ \begin{array}{llllllll} 1& 1 & 0& 0& \cdots & 0& 0 & 0\\
1& 1 & 1& 0& \cdots & 0& 0 & 0\\
0& 1 & 1& 1& \cdots &0 & 0 & 0\\
\vdots& \vdots & \vdots& \vdots& \cdots & \vdots & \vdots & \vdots\\
0& 0 & 0& 0& \cdots &1 & 1 & 1\\
0& 0 & 0& 0& \cdots &0 & 1 & 1
\end{array}\right].
\end{align}
Note that $A^T A\succeq 0$, but it is not necessarily full rank. Clearly $x^*=[0,\cdots, 0]^T$ is one of the optimal solutions for problem \eqref{eq:example}, while $0$ is also the the optimal objective value. Further, we know that the maximum eigenvalue for the matrix $A$ is:
$$\lambda_{\max}(A)=1+2 \cos(i \pi/(K+1)), \quad k=1,\cdots, K,$$
which means that $\|A\|_2\le 3$. Therefore the Lipschitz constant of $\nabla g(x)$ is bounded by  $L\le 18$, {\it regardless of the dimension $K$}.
For the classical gradient decent method, we must have (cf. \eqref{eq:complexity:gd}){
\begin{align}
\Delta^{(r)}_{\rm GD}\le \frac{2\times 18 R^2_0}{r+4}.
\end{align}}
Let each $x_k\in\mathbb{R}$ be a {\it scalar}. We can equivalently write problem \eqref{eq:example} as{
\begin{align}
\min \quad g(x):=\big\|\sum_{k=1}^{K}A_k x_k\big\|^2\label{eq:example:bcd}
\end{align}}
\!where $A_k$ is the $k$th column of $A$. For this block structure, we have $L_{\min}=4$, $L_{\max}=9$, and $L/L_{\min}\le 5$.

Based upon the above block structure, it is easy to verify that the cyclic BCD with exact minimization generates the same sequence as the cyclic BCPG with stepsizes $\{1/L_k\}_{k}$. Therefore our result below holds for {\it both} algorithms.
 Consider problem \eqref{eq:example:bcd} with $A=[A_1,\cdots, A_K]$ given by \eqref{eq:A}. Let
\begin{align}
x^{(0)} = \big[ 1, \frac{1}{8}, \frac{3}{4}, 1, 1, \cdots, 1, 1 \big]^T\in\mathbb{R}^K. \label{eq:x0}
\end{align}
We can show through explicit computation that after running cyclic BCD/BCPG for one pass of all blocks, the optimality gap $\Delta^{(1)}$ is bounded below by
\begin{align}\label{eq:lower:bound}
{\Delta^{(1)}} \ge \frac{9(K-3)}{4(K-1)}{\|x^{(0)}-x^*\|^2}, \; \forall~K\ge 3.
\end{align}
The derivation is relegated to the Appendix.
This result implies that it is not possible to derive a global complexity bound $\mathcal{O}(\frac{L}{Kr })$ for BCD and BCPG with stepsizes $\{1/L_k\}_{k}$

\section{Iteration Complexity for General Convex Problems}\label{sec: general convex}
In this section, we consider improved iteration complexity bounds of BCD for general unconstrained smooth convex problems. We prove a general iteration complexity result, which includes a result of Beck et al. \cite{Beck13} as a special case. Our analysis for the general case also applies to smooth quadratic problems, but is very different from the analysis in previous sections
 for quadratic problems. For simplicity, we only consider the case $N=1$ (scalar blocks); the generalization to the case $N>1$ is left as future work.

Let us assume that the smooth objective $g$ has second order derivatives $H_{ij}(x): = \frac{ \partial^2 g }{\partial x_i \partial x_j }( x ) $.
When each block is just a coordinate, we assume
$ |H_{ij}(x)| \leq L_{ij}, \forall i,j .$
Then $L_i = L_{ii}$ and $L_{ij}\leq \sqrt{L_i} \sqrt{L_j} $.
For unconstrained smooth convex problems with scalar block variables, the BCPG iteration reduces to the following coordinate gradient descent (CGD) iteration:{
\begin{equation}\label{chain update}
  x^{(r)} = w_1^{(r)} \overset{d_1 }{\longrightarrow} w^{(r)}_2 \overset{d_2}{\longrightarrow }w^{(r)}_3 \longrightarrow \dots \overset{d_K}{\longrightarrow}
  w^{(r)}_{K+1} = x^{(r+1)},
\end{equation}}
where $ d_k = \nabla_k g(w^{(r)}_k) $ and $  w^{(r)}_k \overset{d_k }{\longrightarrow} w^{(r)}_{k+1} $ means that
$w^{(r)}_{k+1}$ is a linear combination of $w^{(r)}_k $ and $d_k e_k$ ($e_k$ is the $k$-th block unit vector).

The general framework follows the standard three-step approach that combines sufficient descent and cost-to-go estimate; nevertheless, the analysis of the sufficient descent is very different from the methods used in the previous sections.
\begin{lemma}\label{lemma:descent:general}(sufficient descent)
There exist $\xi_k\in\mathbb{R}^{N}$ lying in the line segment between $x^{(r)}$ and $w^{(r)}_k, k=2,\dots, K$, such that
{
    \begin{equation}\label{eq: sufficient decrease}
      g(x^{(r)}) - g(x^{(r+1)})  \geq \frac{1}{2} \frac{1}{ P_{\max} +  \frac{ \| H(\xi)\|^2 }{ P_{\min} }  }   \| \nabla g(x^{(r)})\|^2,
\end{equation} }
where {
  \begin{equation}\label{H def, first}
  H(\xi) \triangleq
     \begin{bmatrix}
  0                           &  0   & 0  &  \dots  &  0 &  0   \\
  H_{21}(\xi_2) &  0   & 0  &  \dots  &  0 &  0   \\
  H_{31}(\xi_3) &  H_{32}(\xi_3)   & 0  &  \dots  &  0 &  0   \\
    \vdots                    &  \vdots   &  \vdots  &  \vdots   &  \ddots  &  \vdots  \\
       H_{K1}(\xi_K) &   H_{K2}(\xi_K)   &    H_{K3}(\xi_K)   &  \dots  &   H_{K,K-1}(\xi_K) &  0   \\
\end{bmatrix}.
\end{equation}}
\end{lemma}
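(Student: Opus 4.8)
The plan is to track the effect of one full cycle of coordinate gradient steps on the gradient $\nabla g(x^{(r)})$, expressing $\nabla g(x^{(r+1)})$ (and the per-step updates) in terms of $\nabla g(x^{(r)})$ via a second-order Taylor argument, and then to lower-bound the total decrease by the sum of per-coordinate decreases. Concretely, the $k$-th coordinate step takes $w^{(r)}_k$ to $w^{(r)}_{k+1}$, where the $k$-th coordinate is moved by $-d_k/P_k$ with $d_k = \nabla_k g(w^{(r)}_k)$. The standard descent lemma applied coordinatewise gives $g(w^{(r)}_k) - g(w^{(r)}_{k+1}) \ge \frac{1}{2P_k}\|d_k\|^2$ (using $P_k \ge L_k$), and summing over $k$ yields $g(x^{(r)}) - g(x^{(r+1)}) \ge \sum_k \frac{1}{2P_k}\|d_k\|^2 \ge \frac{1}{2P_{\max}}\sum_k \|d_k\|^2$. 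So the entire content is to compare $\sum_k \|d_k\|^2$ with $\|\nabla g(x^{(r)})\|^2$.

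The key step is to relate the ``moving'' partial gradients $d_k = \nabla_k g(w^{(r)}_k)$ to the ``fixed'' partial gradients $\nabla_k g(x^{(r)})$. First I would write, by the fundamental theorem of calculus / mean value theorem applied to each coordinate of the gradient,
\begin{align}
\nabla_k g(w^{(r)}_k) = \nabla_k g(x^{(r)}) + \sum_{j<k} H_{kj}(\xi_k)\,\big( (w^{(r)}_k)_j - x^{(r)}_j \big), \nonumber
\end{align}
for a suitable $\xi_k$ on the segment between $x^{(r)}$ and $w^{(r)}_k$ (this uses that only the first $k-1$ coordinates have changed when we reach block $k$, and scalar blocks so the mean-value theorem applies to the scalar function $t\mapsto \nabla_k g(\cdot)$ along the segment — the precise justification of a single $\xi_k$ serving all $j<k$ is the delicate point). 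Writing $\delta_j := (w^{(r)}_{j+1})_j - x^{(r)}_j = -d_j/P_j$, this reads $d_k = \nabla_k g(x^{(r)}) - \sum_{j<k} H_{kj}(\xi_k)\, d_j/P_j$. In vector form with $d = (d_1,\dots,d_K)^T$, $\nabla g(x^{(r)}) = (\mathrm{I} + H(\xi)\,\tP^{-1})\,d$, where $H(\xi)$ is exactly the strictly-lower-triangular matrix in \eqref{H def, first} and $\tP = \mathrm{blkdg}\{P_k\}$; equivalently $d = (\mathrm{I} + H(\xi)\tP^{-1})^{-1}\nabla g(x^{(r)})$.

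From here it is linear algebra: I want $\|d\|^2 \ge c\,\|\nabla g(x^{(r)})\|^2$, i.e. an upper bound on $\|\mathrm{I} + H(\xi)\tP^{-1}\|$. Crudely $\|\mathrm{I} + H(\xi)\tP^{-1}\| \le 1 + \|H(\xi)\|/P_{\min}$, which would give a bound of the shape $\frac{1}{2P_{\max}}\big(1+\|H(\xi)\|/P_{\min}\big)^{-2}\|\nabla g(x^{(r)})\|^2$; to land precisely on \eqref{eq: sufficient decrease} I would instead bound $P_{\max}\|\mathrm{I}+H(\xi)\tP^{-1}\|^2 \le P_{\max} + \|H(\xi)\|^2/P_{\min}$. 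This follows from $\|(\mathrm{I}+M)v\|^2 = \|v\|^2 + 2\langle v, Mv\rangle + \|Mv\|^2$ with $M = H(\xi)\tP^{-1}$: since $H(\xi)$ is strictly lower triangular and $\tP$ diagonal, $\langle v, Mv\rangle$ and the other cross terms can be handled by a Young's-inequality / AM-GM split of the weights $P_k$, or one notes $\mathrm{I}+H(\xi)\tP^{-1}$ acts on $\tP^{1/2}$-scaled coordinates so that the mixed term telescopes favorably — the cleanest route is to estimate $\langle \tP^{1/2} v, (\tP^{-1/2}H(\xi)\tP^{-1/2})(\tP^{1/2}v)\rangle$ and use $\|\tP^{-1/2}H(\xi)\tP^{-1/2}\| \le \|H(\xi)\|/P_{\min}$ together with a weighted Cauchy–Schwarz, then collect to get the claimed $P_{\max} + \|H(\xi)\|^2/P_{\min}$ factor. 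Combining with $g(x^{(r)}) - g(x^{(r+1)}) \ge \frac{1}{2}\|d\|^2/P_{\max}$ finishes the proof.

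The main obstacle I expect is the mean-value step: producing a \emph{single} point $\xi_k$ on the segment between $x^{(r)}$ and $w^{(r)}_k$ such that $\nabla_k g(w^{(r)}_k) - \nabla_k g(x^{(r)}) = \sum_{j<k} H_{kj}(\xi_k)\big((w^{(r)}_k)_j - x^{(r)}_j\big)$ simultaneously for all $j<k$. For a general smooth $g$ this is exactly a one-dimensional mean value theorem applied to $\phi(t) := \nabla_k g\big(x^{(r)} + t(w^{(r)}_k - x^{(r)})\big)$, giving $\phi(1)-\phi(0) = \phi'(\tau)$ for some $\tau\in(0,1)$, and $\phi'(\tau) = \sum_{j} H_{kj}(\xi_k)\big((w^{(r)}_k)_j - x^{(r)}_j\big)$ with $\xi_k = x^{(r)} + \tau(w^{(r)}_k - x^{(r)})$; the sum is over $j<k$ automatically since the other coordinates of $w^{(r)}_k$ equal those of $x^{(r)}$. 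So the obstacle is really just bookkeeping about which coordinates have moved by the time block $k$ is updated, plus being careful that $\xi_k$ may differ across $k$ (hence the vector notation $H(\xi)$ with a tuple $\xi = (\xi_2,\dots,\xi_K)$). Everything after that is routine triangular-matrix norm estimation.
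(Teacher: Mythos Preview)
Your overall strategy matches the paper's: per-coordinate descent lemma, mean-value theorem to write $\nabla_k g(x^{(r)})$ in terms of the $d_j$'s via $H(\xi)$, then a matrix-norm estimate. Your treatment of the mean-value step (applying the one-dimensional MVT to $t\mapsto \nabla_k g(x^{(r)}+t(w^{(r)}_k-x^{(r)}))$ to produce a single $\xi_k$) is exactly right.

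The gap is in the linear-algebra finish. The inequality you assert,
\[
P_{\max}\,\bigl\|\,I+H(\xi)\tP^{-1}\bigr\|^2 \;\le\; P_{\max} + \frac{\|H(\xi)\|^2}{P_{\min}},
\]
is \emph{false}. Take $K=2$, $P_1=P_2=1$, and $H=\bigl(\begin{smallmatrix}0&0\\h&0\end{smallmatrix}\bigr)$; then $\|I+H\|^2 = 1+\tfrac{h^2}{2}+h\sqrt{1+h^2/4}$, which exceeds $1+h^2$ for every $h>0$. Strict lower-triangularity does not make the cross term $2\langle v,H\tP^{-1}v\rangle$ nonpositive, and no Young/AM--GM split of the type you sketch can rescue it.

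The cause is the premature bound $\sum_k \frac{1}{2P_k}d_k^2 \ge \frac{1}{2P_{\max}}\|d\|^2$: once you discard the $P_k$-weighting here, you cannot reinsert it through the operator norm. The paper avoids this by keeping the weighted norm throughout: set $\tilde d := \tP^{-1/2}d$, so that the descent is \emph{exactly} $\tfrac{1}{2}\|\tilde d\|^2$, and $\nabla g(x^{(r)}) = (\tP^{1/2}+H(\xi)\tP^{-1/2})\tilde d =: V\tilde d$. Then one only needs $\|V\|^2 \le 2\bigl(P_{\max}+\|H(\xi)\|^2/P_{\min}\bigr)$, which is immediate from $\|A+B\|^2\le 2(\|A\|^2+\|B\|^2)$. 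Your own hint about ``$\tP^{1/2}$-scaled coordinates'' is precisely this move; it just has to be applied to $d$ \emph{before} passing to an operator norm, not after.
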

\begin{proof}
Since $ w^r_{k+1}$ and $w^r_{k}$ only differ by the $k$-th block, and $\nabla_k g$ is Lipschitz continuous with Lipschitz constant $L_k$, we have
\footnote{ A stronger bound is $g(w^r_{k+1}) \leq g(w^r_k ) -   \frac{1}{ 2 P_k } \| \nabla_k g(w^r_{k}) \|^2 $,
where
$
 \hat{P}_k =   \frac{P_k^2  }{ 2P_k - L_k}  \leq  P_k ,
$ but since $ P_k \leq 2 P_k - L_k  \leq 2P_k$, the improvement ratio of using this stronger bound is no more than a factor of $2$.
}
\begin{equation}
\begin{split}
  g(w^r_{k+1}) \leq  & g(w^r_k ) + \langle \nabla_k g(w^r_{k}), w^r_{k+1} - w^r_k  \rangle + \frac{L_k}{2} \| w^r_{k+1} - w^r_k \|^2  \\
               =     & g(w^r_k ) -   \frac{ 2P_k - L_k }{ 2 P_k^2 } \| \nabla_k g(w^r_{k}) \|^2 \\
               \leq     & g(w^r_k ) -   \frac{1}{ 2 P_k } \| \nabla_k g(w^r_{k}) \|^2 ,
\end{split}
\end{equation}
where the last inequality is due to $P_k \geq L_k $.

The amount of decrease of BCGD is
\begin{equation}\label{naive decrease of BCGD}
  g(x^r) - g(x^{r+1})
   = \sum_{k=1}^r [ g(w^r_k )  - g( w^{r}_{k+1} )  ]
   \geq \sum_{k=1}^r \frac{1}{2 P_k } \| \nabla_k g(w^r_{k}) \|^2.
\end{equation}


Since $$ w^{r}_k = x^r - \begin{bmatrix}
\frac{1}{ P_1 } d_1 , \dots ,  \frac{1}{ P_{k-1} } d_{k-1} ,  0 ,  \dots ,  0
\end{bmatrix}^T,
 $$
by the mean-value theorem, there must exist $\xi_k $ such that
\begin{equation}\label{nabla k diff, first bound}
\begin{split}
     &  \nabla_k g( x^r ) - \nabla_k g( w^{r}_k )
       =     \nabla( \nabla_k g )  (\xi_k )  \cdot ( x^r - w^r_k  )  \\
      & =   \left[ \frac{\partial^2 g }{\partial x_k \partial x_1 } (\xi_k ), \dots, \frac{\partial^2 g }{\partial x_k \partial x_{k-1} }(\xi_k )
     , 0, \dots, 0 \right]
      \begin{bmatrix}
\frac{1}{ P_1 } d_1 , \dots ,  \frac{1}{ P_{k-1} } d_{k-1} ,  0 ,  \dots ,  0
\end{bmatrix}^T   \\
& = \left[ \frac{ 1 }{ \sqrt{P_1} } H_{k1} (\xi_k ), \dots, \frac{ 1 }{ \sqrt{P_{k-1}} } H_{k,k-1}(\xi_k )
     , 0, \dots, 0 \right]
      \begin{bmatrix}
 \frac{1}{\sqrt{P_1} } d_1 ,  \dots ,  \frac{1}{ \sqrt{P_K } } d_{K}
\end{bmatrix}^T ,
\end{split}
\end{equation}
where $H_{ij}(x) = \frac{ \partial^2 g }{\partial x_i \partial x_j }( x ) $ is the second order derivative of $g$.
Then
\begin{equation}\label{nabla k first bound}
\begin{split}
  \nabla_k g( x^r ) & = \nabla_k g( x^r ) - \nabla_k g( w^{r}_k ) + \nabla_k g( w^{r}_k ) \\
   & = \left[ \frac{ 1 }{ \sqrt{P_1} } H_{k1} (\xi_k ), \dots, \frac{ 1 }{ \sqrt{P_{k-1}} } H_{k,k-1}(\xi_k )
     , 0, \dots, 0 \right]
      \begin{bmatrix}
 \frac{1}{\sqrt{P_1} } d_1 ,  \dots ,  \frac{1}{ \sqrt{P_K } } d_{K}
\end{bmatrix}^T  +  d_k  \\
  & =  \left[ \frac{ 1 }{ \sqrt{P_1} } H_{k1} (\xi_k ), \dots, \frac{ 1 }{ \sqrt{P_{k-1}} } H_{k,k-1}(\xi_k )
     , \sqrt{P_k}, 0, \dots, 0 \right]
      \begin{bmatrix}
 \frac{1}{\sqrt{P_1} } d_1 ,  \dots ,  \frac{1}{ \sqrt{P_K } } d_{K} \end{bmatrix}^T  \\
 &  = v_k^T d,
\end{split}
\end{equation}
where we have defined
\begin{equation}
\begin{split}
d \triangleq  &  \begin{bmatrix}
 \frac{1}{\sqrt{P_1} } d_1 ,  \dots ,  \frac{1}{ \sqrt{P_K } } d_{K}
\end{bmatrix}^T,  \\
v_k    \triangleq  & \left[ \frac{ 1 }{ \sqrt{ P_{1}} } H_{k1} (\xi_k ),   \dots, \frac{ 1 }{ \sqrt{P_{k-1}} } H_{k,k-1}(\xi_k )
     , \sqrt{P_k} , \dots, 0 \right]  .
\end{split}
\end{equation}
Let{\small
\begin{equation}\label{V def}
  V \triangleq  \begin{bmatrix} v_1^T \\ \dots \\ v_K^T \end{bmatrix}
     =  \begin{bmatrix}
   \sqrt{P_1}                        &  0   & 0  &  \dots  &  0 &  0   \\
  \frac{ 1}{\sqrt{ P_1 } } H_{21}(\xi_2) & \sqrt{P_2}   & 0  &  \dots  &  0 &  0   \\
  \frac{ 1}{\sqrt{  P_1 } } H_{31}(\xi_3) & \frac{ 1}{\sqrt{ P_2 } } H_{32}(\xi_3)   &  \sqrt{P_3}   &  \dots  &  0 &  0   \\
  \frac{ 1}{\sqrt{  P_1 } } H_{41}(\xi_4) &  \frac{ 1}{\sqrt{  P_2 } } H_{42}(\xi_4)   &
 \frac{ 1}{\sqrt{  P_3 } } H_{43}(\xi_4)   &  \ddots  &  0 &  0   \\
    \vdots                    &  \vdots   &  \vdots  &  \vdots   &  \ddots  &  \vdots  \\
    \frac{ 1}{\sqrt{  P_1 } }  H_{K1}(\xi_K) &   \frac{ 1}{\sqrt{  P_2 } } H_{K2}(\xi_K)   &    \frac{ 1}{\sqrt{  P_3 } } H_{K3}(\xi_K)   &  \dots  &  \frac{ 1}{\sqrt{  P_{K-1} } } H_{K,K-1}(\xi_K) &  \sqrt{P_K}    \\
\end{bmatrix}
\end{equation}}
Therefore, we have
$$  \| \nabla g(x^r) \|^2 =  \sum_k  \| \nabla_k g( x^r ) \|^2
  \overset{\eqref{nabla k first bound}}{=}   \sum_k  \| v_k^T d \|^2 = \| V d \|^2 \leq \| V\|^2 \| d \|^2
   = \|V \|^2 \sum_k \frac{1}{P_k} \| \nabla_k g(w^{r}_k )\|^2.
  $$
Combining with \eqref{naive decrease of BCGD}, we get
\begin{equation}\label{use V to bound decrease}
g(x^r) - g(x^{r+1})
   \geq \sum_k \frac{1}{2 P_k } \| \nabla_k g(w^r_{k}) \|^2
   \geq  \frac{1}{ 2 \| V\|^2 }   \| \nabla g(x^r)\|^2.
\end{equation}

Let $ D \triangleq \text{Diag}( P_1, \dots, P_K ) $ and let $H(\xi)$ be defined as in \eqref{H def, first},
then $V = D^{1/2} + H(\xi) D^{-1/2} $, which implies
$$
  \| V \|^2 = \| D^{1/2} + H(\xi) D^{-1/2}  \|^2  \leq 2( \| D^{1/2}\|^2 + \| H(\xi) D^{-1/2}  \|^2 ) \leq 2\left( P_{\max} +  \frac{\| H(\xi) \|^2 }{ P_{\min} } \right) .
$$
Plugging into \eqref{use V to bound decrease}, we obtain the desired result. \QED
\end{proof}

The above lemma provides an estimate of the sufficient descent. The intuition is that CGD can be viewed as an inexact gradient descent method, thus the amount of descent can be bounded in terms of the norm of the full gradient.
It would be difficult to further tighten this bound if the goal is to obtain a sufficient descent based on the norm of the full gradient. To further improve this bound, we suspect that either the third order derivatives of $g$ or the relation between $ H(\xi) $ and $d_k = \nabla_k g(w^r_k) $ should be considered 
 (in the derivation of Lemma \ref{lemma:descent:general} we treat $H(\xi)$ and $d_k$ independently).

Having established the sufficient descent in terms of the full gradient $  \nabla g(x^{(r)})$,
we can easily prove the iteration complexity result, following the standard analysis of GD (see, e.g. \cite[Theorem 2.1.13]{Nesterov04}).
\begin{theorem}\label{thm general cvx} Suppose $\beta \in \mathbb{R}_+$ satisfies {
\begin{equation}\label{def of beta}
 \| H (\xi ) \| \leq \beta, \ \forall \ \xi_2 , \dots, \xi_K \in \mathbb{R}^{N},
\end{equation} }
then we have
 {
    \begin{equation}\label{eq in Thm 1, general ite complexity of BCGD}
     g(x^{(r)}) - g(x^*)   \leq 2\left( P_{\max} +  \frac{ \beta^2 }{ P_{\min} } \right)   \frac{ R_0^2 }{r}, \ \forall \ r \geq 1.
\end{equation} }
\end{theorem}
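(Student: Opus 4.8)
The plan is to combine the sufficient-descent estimate from Lemma~\ref{lemma:descent:general} with the classical telescoping argument from the convergence analysis of gradient descent on smooth convex functions (as in \cite[Theorem 2.1.13]{Nesterov04}). First I would invoke the hypothesis~\eqref{def of beta} to upper bound $\|H(\xi)\|$ by $\beta$ in~\eqref{eq: sufficient decrease}, which turns the sufficient descent into the clean inequality
\[
  g(x^{(r)}) - g(x^{(r+1)}) \;\geq\; \frac{1}{\alpha}\,\|\nabla g(x^{(r)})\|^2, \qquad \alpha := 2\Big(P_{\max} + \tfrac{\beta^2}{P_{\min}}\Big).
\]
Since the right-hand side is nonnegative, $\{g(x^{(r)})\}$ is nonincreasing, hence $f(x^{(r)}) = g(x^{(r)}) \leq g(x^{(0)})$ for all $r$ (recall $h\equiv 0$ in this section), and the definition of $R_0$ in~\eqref{eq:r0} then guarantees $\|x^{(r)} - x^*\| \leq R_0$ for every $x^* \in X^*$ and every $r$.

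Next I would exploit convexity of $g$ to control the optimality gap $\Delta^{(r)} := g(x^{(r)}) - g(x^*)$ by the gradient norm:
\[
  \Delta^{(r)} \;\leq\; \langle \nabla g(x^{(r)}), x^{(r)} - x^* \rangle \;\leq\; \|\nabla g(x^{(r)})\|\,\|x^{(r)} - x^*\| \;\leq\; R_0\,\|\nabla g(x^{(r)})\|,
\]
so that $\|\nabla g(x^{(r)})\|^2 \geq (\Delta^{(r)})^2/R_0^2$. Substituting this into the descent inequality produces the quadratic recursion
\[
  \Delta^{(r)} - \Delta^{(r+1)} \;\geq\; \frac{(\Delta^{(r)})^2}{\alpha R_0^2}.
\]

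Finally I would convert this recursion into the desired $\mathcal{O}(1/r)$ rate. Assuming $\Delta^{(r)} > 0$ (if $\Delta^{(r)} = 0$ for some $r$ the claimed bound holds trivially from that index onward), divide the recursion by $\Delta^{(r)}\Delta^{(r+1)}$ and use $\Delta^{(r+1)} \leq \Delta^{(r)}$ to obtain
\[
  \frac{1}{\Delta^{(r+1)}} - \frac{1}{\Delta^{(r)}} \;\geq\; \frac{1}{\alpha R_0^2}.
\]
Summing this over $r = 0, 1, \dots, r-1$ and discarding the nonnegative term $1/\Delta^{(0)}$ gives $1/\Delta^{(r)} \geq r/(\alpha R_0^2)$, which is precisely~\eqref{eq in Thm 1, general ite complexity of BCGD}. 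I do not expect any genuinely hard step: the entire analytical burden is carried by Lemma~\ref{lemma:descent:general}, and what remains is the standard GD telescoping. The only minor points deserving attention are (i) checking that the monotone decrease of $g$ keeps the iterates inside the sublevel set so that the bound $\|x^{(r)} - x^*\| \leq R_0$ is legitimate, and (ii) the bookkeeping around the degenerate case $\Delta^{(r)} = 0$; both are routine.
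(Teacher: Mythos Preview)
Your proposal is correct and follows essentially the same route as the paper: invoke Lemma~\ref{lemma:descent:general} with the bound $\|H(\xi)\|\le\beta$, use convexity and $\|x^{(r)}-x^*\|\le R_0$ to get the quadratic recursion in $\Delta^{(r)}$, then apply the standard $1/\Delta$ telescoping. The only cosmetic difference is that the paper works with $\omega=1/\alpha$ instead of your $\alpha$, and it does not explicitly flag the degenerate case $\Delta^{(r)}=0$.
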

\begin{proof}
Denote $ \omega = \frac{1}{2} \frac{1}{ P_{\max} +  \frac{ \beta^2 }{ P_{\min} }  }   $,
then \eqref{eq: sufficient decrease} becomes
\begin{equation}\label{successive diff, second time}
      g(x^{(r)}) - g(x^{(r+1)})  \geq  \omega \| \nabla g(x^{(r)})\|^2 , \ \forall r.
\end{equation}
This relation also implies $g(x^{(r)}) \leq g(x^{(0)})$, thus by the definition of $R_0 $ in \eqref{eq:r0} we have
$\| x^{(r)} - x^* \| \leq R_0 $.
By the convexity of $g$ and the Cauchy-Schwartz inequality, we have
$$
 g(x^{(r)}) - g(x^*) \leq \langle \nabla g(x^{(r)}), x^{(r)} - x^*  \rangle \leq \|\nabla g(x^{(r)})\| R_0.
$$
Combining with \eqref{successive diff, second time}, we obtain
$$
  g(x^{(r)}) - g(x^{(r+1)}) \geq \frac{ \omega }{ R_0^2 } ( g(x^{(r)}) - g(x^*) )^2.
$$
Let $ \Delta^{(r)} = g(x^{(r)}) - g(x^*)$, we obtain
$$
  \Delta^{(r)} - \Delta^{(r+1)} \geq \frac{ \omega }{ R_0^2 } \Delta^{(r)}.
$$
Following the proof of , we have
$$
  \frac{1}{\Delta^{(r+1)}} \geq \frac{1}{\Delta^{(r)}}  + \frac{\omega }{ R_0^2 } \frac{\Delta^{(r)}}{\Delta^{(r+1)}}
  \geq \frac{1}{\Delta^{(r)}}  + \frac{\omega }{ R_0^2 }.
$$
Summarizing the inequalities, we get
$$
  \frac{1}{\Delta^{(r+1)}}  \geq \frac{1}{\Delta^{(0)}} + \frac{\omega }{ R_0^2 } (r+1) \geq \frac{\omega }{ R_0^2 } (r+1),
$$
which leads to
$$
 \Delta^{(r+1)} = g(x^{(r+1)}) - g(x^*) \leq \frac{1}{\omega} \frac{R_0^2}{ r+1 } = 2( P_{\max} +  \frac{ \beta^2 }{ P_{\min} } ) \frac{R_0^2}{ r+1 }.
$$
This completes the proof of the claim. \QED
\end{proof}
In the following corollary, we provide an initial estimate of  $\beta$ and obtain a more concrete iteration complexity bound.
\begin{corollary}\label{coro: CGD ite complexity} For CGD with  $P_k  \geq L_{\max}, \forall k$, we have $\beta^2 \leq \min \{ KL^2, (\sum_k L_k)^2 \}$, thus
{
    \begin{equation}\label{relate BCGD to GD}
     g(x^{(r)}) - g(x^*)   \leq  2 \left( P_{\max} +  \frac{ \min\{ K L^2, (\sum_k L_k )^2  \}  }{ P_{\min} } \right)   \frac{ R_0^2 }{r} , \ \forall \ r \geq 1.
\end{equation}}
\end{corollary}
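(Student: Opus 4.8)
The goal of Corollary \ref{coro: CGD ite complexity} is to convert the abstract bound of Theorem \ref{thm general cvx}, which is stated in terms of the spectral norm $\beta$ of the ``moving-iterate Hessian'' $H(\xi)$, into an explicit bound depending only on the Lipschitz constants $L, L_k$. Thus the entire task reduces to producing a uniform upper bound on $\| H(\xi) \|$ that holds for every admissible choice of $\xi_2, \dots, \xi_K$, and then substituting into \eqref{eq in Thm 1, general ite complexity of BCGD}.

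The plan is to bound $\| H(\xi)\|$ in two complementary ways and take the smaller. First, for the bound $\beta^2 \le (\sum_k L_k)^2$: note that $H(\xi)$ is the strictly lower triangular part of a symmetric matrix whose $(i,j)$ entry is $H_{ij}(\xi_i)$ (with rows evaluated at different points, but the entrywise magnitude bound $|H_{ij}| \le L_{ij} \le \sqrt{L_i}\sqrt{L_j}$ holds regardless). I would dominate $\| H(\xi)\|$ by the Frobenius norm, or more simply observe that each entry is at most $\sqrt{L_i L_j}$ in absolute value, so $H(\xi)$ is entrywise dominated by the rank-one matrix $\ell \ell^T$ with $\ell = (\sqrt{L_1}, \dots, \sqrt{L_K})^T$; since all entries are nonnegative after taking absolute values, $\| H(\xi)\| \le \| |H(\xi)| \| \le \| \ell \ell^T \| = \|\ell\|^2 = \sum_k L_k$. (Here I use that the spectral norm of a nonnegative matrix is monotone under entrywise domination.) Second, for the bound $\beta^2 \le K L^2$: here I would use the fact that $H(\xi)$, being the strict-lower-triangular truncation of a Hessian-like matrix with spectral norm at most $L$ (each $\nabla^2 g(\xi_i)$ has norm $\le L$ by \eqref{eq:lip}), has norm at most... this is where the triangular truncation estimate is relevant, but the cleaner route for a factor-$K$ bound is: $\| H(\xi)\|^2 \le \| H(\xi)\|_F^2 = \sum_{i>j} |H_{ij}(\xi_i)|^2 \le \sum_{i,j} L_{ij}^2$, and since for each fixed $i$ the row $(L_{i1}, \dots, L_{iK})$ satisfies $\sum_j L_{ij}^2 \le$ (something controlled by $L^2$) — more directly, $\sum_j H_{ij}(\xi_i)^2 = \| (\nabla^2 g(\xi_i))_{i,\cdot}\|^2 \le \| \nabla^2 g(\xi_i)\|^2 \le L^2$, so summing over the $K$ rows gives $\| H(\xi)\|_F^2 \le K L^2$. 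Taking the minimum of the two yields $\beta^2 \le \min\{ K L^2, (\sum_k L_k)^2\}$.

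Then I would simply plug this value of $\beta$ (legitimately, since Theorem \ref{thm general cvx} only requires $\beta$ to be \emph{an} upper bound on $\| H(\xi)\|$, and monotonicity of the bound \eqref{eq in Thm 1, general ite complexity of BCGD} in $\beta$ means we may use any valid upper bound) into \eqref{eq in Thm 1, general ite complexity of BCGD} to obtain \eqref{relate BCGD to GD}. The condition $P_k \ge L_{\max}$ is needed so that $P_{\min} \ge L_{\max} \ge L_k$ for all $k$, which is consistent with the standing requirement $P_k \ge L_k$ used in Lemma \ref{lemma:descent:general}; it also makes $P_{\min}$ appear in a clean form in the denominator.

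The main obstacle is making the Frobenius-norm argument for the $KL^2$ bound fully rigorous given that different rows of $H(\xi)$ are evaluated at different points $\xi_i$: one cannot treat $H(\xi)$ as the truncation of a single matrix $\nabla^2 g$ at one point. The fix is to work row by row — for each fixed row index $i$ the entries $H_{i1}(\xi_i), \dots, H_{i,i-1}(\xi_i)$ are all evaluated at the \emph{same} point $\xi_i$, so they form (part of) a genuine row of the symmetric matrix $\nabla^2 g(\xi_i)$, whose Euclidean length is at most $\|\nabla^2 g(\xi_i)\|_2 \le L$; summing the squared row norms over $i = 1, \dots, K$ then gives $\|H(\xi)\|_F^2 \le K L^2$ cleanly. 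The $(\sum_k L_k)^2$ bound is robust to the per-row evaluation point since it uses only the entrywise magnitude bounds $|H_{ij}| \le \sqrt{L_i L_j}$, which hold pointwise. A secondary (minor) point to check is that using $\beta$ as an upper bound rather than the exact supremum is permitted — this follows directly from the statement of Theorem \ref{thm general cvx}, which hypothesizes $\|H(\xi)\| \le \beta$ rather than equality.
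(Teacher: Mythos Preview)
Your proposal is correct and follows essentially the same route as the paper: both bounds on $\beta^2$ are obtained via the Frobenius norm, with the $KL^2$ bound coming from exactly your row-by-row estimate (each row of $H(\xi)$ lies in a row of $\nabla^2 g(\xi_i)$, hence has Euclidean norm at most $L$), and the $(\sum_k L_k)^2$ bound from $\|H\|_F^2 \le \sum_{k>j} L_k L_j \le (\sum_k L_k)^2$. Your entrywise rank-one domination argument $|H(\xi)| \preceq \ell\ell^{T}$ with $\ell=(\sqrt{L_1},\dots,\sqrt{L_K})^{T}$ is a valid and slightly slicker alternative for the latter bound, but otherwise the arguments coincide.
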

\begin{proof}
First, from the fact that $H_{kj}(\xi_k)$ is a scalar bounded above by $ | H_{kj}(\xi_k ) | \leq L_{kj} \leq \sqrt{L_k L_j }$, thus{
\begin{equation}\label{H first bound}
\begin{split}
 \| H \|^2 \leq \| H\|_F^2 = \sum_{k < j} | H_{kj}(\xi_k ) |^2 \leq \sum_{k < j}L_k L_j
 \leq (\sum_k  L_k)^2.
\end{split}
\end{equation}}
We provide the second bound of $\| H \|$ below. Let $H_k $ denote the $k$-th row of $H$, then
$\| H_k \| \leq L$. Therefore, we have
$$
  \| H\|^2 \leq \| H\|_F^2 = \sum_k \| H_k\|^2 \leq \sum_k L^2 =  K L^2.
$$
Plugging this bound and \eqref{H first bound} into \eqref{eq in Thm 1, general ite complexity of BCGD}, we obtain the desired result. \QED
\end{proof}

Let us compare this bound with the bound derived in \cite[Theorem 3.1]{Beck13} (replacing the denominator $r + 8/K$ by $r$), which is
\begin{equation}\label{Beck}
     g(x^r) - g(x^*)   \leq  4 \left( P_{\max} +  \frac{P_{\max}}{P_{\min}} \frac{  K L^2  }{ P_{\min} } \right)   \frac{ R^2 }{r} .
\end{equation} 
The key step in establishing \eqref{Beck} is essentially proving $ \beta^2 \leq KL^2$ (though not explicitly stated in their paper).
In our new bound, besides reducing the coefficient from $4$ to $2$ and removing the factor $  \frac{P_{\max}}{P_{\min}} $,
 we provide a new bound for $\beta^2 $ which is $(\sum_k L_k )^2 $, thus replacing
$K L^2 $ by  $ \min\{ K L^2, (\sum_k L_k )^2  \}  $.
Neither of the two bounds $K L^2$ and $(\sum_k L_k)^2$ implies the other: when $L = L_k, \forall k$ the new bound $(\sum_k L_k)^2$ is $K$ times larger;
when $L = K L_k, \forall k$ or $L = L_1 > L_2 = \dots = L_K = 0$ the new bound is $K$ times smaller.
In fact, when $L = K L_k, \forall k$, our new bound is $K$ times better than the bound in \cite{Beck13}
for either $P_k = L_k $ or $P_k = L$.
For example, when $P_k = L, \forall k$,  the bound in  \cite{Beck13} becomes $\mathcal{O}(\frac{KL}{r})$,
while our bound is  $\mathcal{O}(\frac{L}{r})$, which matches GD (listed in Table 1 below).
Another advantage of the new bound $(\sum_k L_k )^2$ is that it does not increase if we add an artificial block $x_{K+1}$ and perform CGD
for function $\tilde{g}(x, x_{k+1}) = g(x)$; in contrast, the existing bound $K L^2$ will increase to $(K+1) L^2$, even though the algorithm does not change at all.

We have demonstrated that our bound can match GD in some cases, but can possibly be $K$ times worse than $GD$.
An interesting question is: for general convex problems can we obtain an $\mathcal{O}(\frac{L}{r})$ bound for cyclic BCGD, matching the bound of GD? Removing the $K$-factor in \eqref{Beck} will lead to an $\mathcal{O}(\frac{L}{r})$ bound for conservative stepsize $P_k=L$ no matter how large $L_k$ and $L$ are.
Unfortunately, our approach which is based on estimating the norm of $H(\xi)$ probably cannot lead to removal of this $K$-factor as
 it seems unlikely to prove a bound $\| H(\xi)\| \leq L$.
Our bound $\| H(\xi) \| \leq \min \{K L^2, (\sum_k L_k)^2 \}$ is tight in some cases, but not always tight.
 A better bound of $\| H(\xi )\|$ (and thus a better iteration complexity bound) may be obtained by answering the following question (below $A_k$ represents the Hessian matrix corresponding to $\xi_k$, $k=1,\dots, K$, and $B(i,:)$ denotes the $i$'th row of a matrix $B$):
\begin{equation*}\label{Question of bound H}
\begin{split}
& \text{ Given } K\times K \text{ matrices } A_1, \dots, A_K \succeq 0 \text{ satisfying diag}(A_i)= ( L_1, \dots, L_K), \forall i \text{ and }  \\
& \| A_k \| \leq L, \text{ what is the best upper bound of } \| H\|, \text{ where } H = [A_1(1,:), \dots, A_i(i,:), \dots,  A_K(K,:)] ?
\end{split}
\end{equation*}
The answer to this question seems to be rather complicated and is left as future work.
%

We conjecture that an $\mathcal{O}(\frac{L}{r})$ bound for cyclic BCGD cannot be achieved for general convex problems. That being said, we point out that the iteration complexity of cyclic BCGD may depend on other intrinsic parameters of the problem such as $\{L_k\}_k$ and, possibly, third order derivatives of $g$. Thus the question of finding the best iteration complexity bound of the form
  $\mathcal{O}( h(K) \frac{L}{r} )$, where $h(K)$ is a function of $K$, may not be the right question to ask for BCD type algorithms.




{
\begin{remark}
Our complexity results apply directly to a popular variant of the BCD --  the permuted BCD, in which the blocks are randomly sampled {\it without replacement}. This is because our analysis only depends on the behavior of the algorithm during one iteration in which all blocks are updated once. The order of the block update across different iterations is irrelevant in the analysis. However, we mention that the key research question for permuted BCD is not whether it converges, but rather whether performing the random permutation delivers improved complexity bounds. To our knowledge this is still an open question that worth further investigation.
\end{remark}  }

\vspace{-0.2cm}
\section{Conclusion}
\vspace{-0.2cm}
In this paper, we provide new analysis and improved complexity bounds for cyclic BCD-type methods.
Our results also apply to BCD methods with random permutation (random sampling without replacement).
For minimizing the sum of a convex quadratic function and separable non-smooth functions, we
show that cyclic BCGD with small stepsize $1/L$ has an iteration complexity bound $\mathcal{O}(\log^2(2K) \frac{L}{r})$, which is independent of $K$
and matches the bound of GD  (except for a mild $O(\log^2(2K))$ factor).
For general smooth convex problems, we consider cyclic CGD and  prove a meta iteration complexity bound that is proportional to the spectral norm of a ``moving-iterate Hessian''.
This meta bound leads to an iteration complexity bound that is sometimes $K$-times better than existing bound of \cite{Beck13},
and matches the bound of GD for the quadratic case when using small stepsize $1/L$.
The derivation of our meta iteration complexity bound is rather tight, but still  this bound seems to be worse than the bound of GD in certain scenarios.
It remains an interesting open question whether this gap in the general convex case is artificial.


\newpage

{\centering\large{\bf Appendix}}

\subsection{Proof of The Tightness Result}\label{sec:tightness}
{\bf Claim}.
{\it Consider problem \eqref{eq:example:bcd} with $A=[A_1,\cdots, A_K]$ given by \eqref{eq:A}. Let
\begin{align}
x^{(0)} = \left[ 1, \frac{1}{8}, \frac{3}{4}, 1, 1, \cdots, 1, 1 \right]^T\in\mathbb{R}^K. \label{eq:x0}
\end{align}
Then after running BCD/BCPG for one iteration, the optimality gap $\Delta^{(r)}$ is bounded below by{
\begin{align}\label{eq:lower:bound}
{\Delta^{(1)}} \ge {L \|x^{(0)}-x^*\|^2}{}\frac{9(K-3)}{4(K-1)}, \; \forall~K\ge 3.
\end{align}}}

\begin{proof}
Assume that we start with a vector $x^{(0)}=\left[x^{(0)}_1, x^{(0)}_2,\cdots, x^{(0)}_K\right]^T$, let us generate the iterate $x^{(1)}$ by using cyclic BCD (assuming that coordinates are picked in the order of $1,2,\cdots, K$).
The first variable $x_1$ is updated by{
\begin{align}
x_1^{(1)}&=\arg\min_{x_1}\quad \left\|A_1 x_1+\sum_{j\ne 1}A_j x^{(0)}_j\right\|^2\nonumber\\
&=\arg\min_{x_1}\quad \left(x_1+x^{(0)}_2\right)^2+\left(x_1+x^{(0)}_2+x^{(0)}_3\right)^2.
\end{align}}
Here the second equality is true by the property of the matrix $A$. Clearly we have
\begin{align}
x^{(1)}_1=-\frac{1}{2}\left(2\times x^{(0)}_2+x^{(0)}_3\right). \label{eq:x_1}
\end{align}
The second variable $x_2$ is updated by
\begin{align}
x_2^{(1)}&=\arg\min\quad \left\|A_2 x_2+A_1 x^{(1)}_1+\sum_{j\ne 1,2}A_j x^{(0)}_j\right\|^2\nonumber\\
&=\arg\min\quad \left(x_2+x^{(1)}_1\right)^2+\left(x_2+x^{(1)}_1+x^{(0)}_3\right)^2+\left(x_2+x^{(0)}_4+x^{(0)}_3\right)^2. \nonumber
\end{align}
Clearly we have
\begin{align}
x^{(1)}_2=-\frac{1}{3}\left(2\times x^{(1)}_1+2\times x^{(0)}_3+x^{(0)}_4\right). \label{eq:x_2}
\end{align}
Similarly, the third variable $x_3$ is updated by
\begin{align}
x_3^{(1)}&=\arg\min\quad \left\|A_3 x_3+A_1x^{(1)}_1+A_2 x^{(1)}_2+\sum_{j\ne 1,2,3}A_j x^{(0)}_j\right\|^2\nonumber\\
&=\arg\min\quad \left(x_3+x^{(1)}_1+x^{(1)}_2\right)^2+\left(x_3+x^{(1)}_2+x^{(0)}_4\right)^2+\left(x_3+x^{(0)}_4+x^{(0)}_5\right)^2. \nonumber
\end{align}
The solution is given by
\begin{align}
x^{(1)}_3=-\frac{1}{3}\left(x^{(1)}_1+2\times x^{(1)}_2+2\times x^{(0)}_4+x^{(0)}_5\right). \label{eq:x_3}
\end{align}
In general, for all $k\in[3, K-2]$, we have
\begin{align}
x^{(1)}_k=-\frac{1}{3}\left(x^{(1)}_{k-2}+2\times x^{(1)}_{k-1}+2\times x^{(0)}_{k+1}+x^{(0)}_{k+2}\right). \label{eq:x_k}
\end{align}
Also we have that $x_{K-1}$ is updated by
\begin{align}
x_{K-1}^{(1)}&=\arg\min\quad \left\|A_{K-1} x_{K-1}+A_K x^{(0)}_K+\sum_{j\ne K-1,K}A_j x^{(1)}_j\right\|^2\nonumber\\
&=\arg\min\quad \left(x_{K-1}+x^{(1)}_{K-2}+x^{(1)}_{K-3}\right)^2+\left(x_{K-1}+x^{(1)}_{K-2}+x^{(0)}_K\right)^2+\left(x_{K-1}+x^{(0)}_{K}\right)^2. \nonumber
\end{align}
Therefore we have
\begin{align}
x^{(1)}_{K-1}=-\frac{1}{3}\left(2\times x^{(1)}_{K-2}+2\times x^{(0)}_K+x^{(1)}_{K-3}\right). \label{eq:x_K-1}
\end{align}
Similarly, we have
\begin{align}
x^{(1)}_{K}=-\frac{1}{2}\left(2\times x^{(1)}_{K-1}+x^{(1)}_{K-2}\right). \label{eq:x_K}
\end{align}
Now use the following initial solution:
\begin{align}
x^{(0)} = \left[ 1, \frac{1}{8}, \frac{3}{4}, 1, 1, \cdots, 1, 1 \right]^T\in\mathbb{R}^K. \label{eq:x0}
\end{align}
Plugin the above iteration we will have
\begin{align}
x^{(1)}_1= -\frac{1}{2}, \quad x^{(1)}_2= -\frac{1}{2}, \quad \cdots, \quad x^{(1)}_{K-2}= -\frac{1}{2}, \quad x^{(1)}_{K-1}= -\frac{1}{6}, \quad x^{(1)}_{K}= \frac{5}{12}.
\end{align}
This means that after the first iteration, the objective becomes
\begin{align*}
g(x^{(1)})& = 1+\frac{9}{4}(K-3)+\frac{1}{16} +\frac{1}{16}\ge \frac{9}{4}(K-3).
\end{align*}
We can also verify that
$$\|x^{(0)}-x^*\|^2 = \sum_{k=1}^{K}(x^{(0)}_k-0)^2=K-2+(1/8)^2+(3/4)^2.$$
Combining the above result, we see that when applying the BCD/BCPG to the problem \eqref{eq:example}, the complexity bound is {\it at least}
\begin{align}\label{eq:lower:bound}
\frac{\Delta^{(1)}}{\|x^{(0)}-x^*\|^2 } \ge \frac{\frac{9}{4}(K-3)}{(K-2+(1/8)^2+(3/4)^2)}\ge \frac{9(K-3)}{4(K-1)}, \; \forall~K\ge 3
\end{align}
This completes the proof of the claim. \QED
\end{proof}

\newpage

\bibliographystyle{ieee}
\bibliography{biblio,ref}

\end{document}